\documentclass[a4paper, 12pt,oneside,reqno]{amsart}
\usepackage[a4paper]{geometry}
\usepackage{amssymb,amsmath,amsthm}
\usepackage[T1]{fontenc}
\usepackage[breaklinks=true]{hyperref}

\DeclareSymbolFont{symbolsC}{U}{pxsyc}{m}{n}
\SetSymbolFont{symbolsC}{bold}{U}{pxsyc}{bx}{n}
\DeclareFontSubstitution{U}{pxsyc}{m}{n}
\DeclareMathSymbol{\medcirc}{\mathbin}{symbolsC}{7}

\DeclareMathSymbol{\medbullet}{\mathbin}{symbolsC}{8}

\usepackage{xcolor}

\usepackage{bbm}
\usepackage{graphicx}

\usepackage{tikz}
\usetikzlibrary{tikzmark}

\def\bl{\mathrm{bl}}

\def\Var{\mathrm{Var}}
\def\I{\mathrm{I}}
\def\dim{\mathrm{dim}}

\def\Nov{\mathrm{Nov}}
\def\Com{\mathrm{Com}}
\def\Adm{\mathrm{Adm}}
\def\T{\mathrm{T}}
\def\As{\mathrm{As}}

\def\NAP{\mathrm{NAP}}
\def\Lie{\mathrm{Lie}}
\def\Perm{\mathrm{Perm}}

\def\Zinb{\mathrm{Zinb}}
\def\Leib{\mathrm{Leib}}

\def\pre{\mathrm{pre}}

\def\di{\mathrm{di}}

\def\Perm{\mathrm{Perm}}

\def\Alt{\mathrm{Alt}}

\def\LSym{\mathrm{LSym}}

\let\<\langle
\let\>\rangle

\newtheorem{definition}{Definition}
\newtheorem{lemma}{Lemma}
\newtheorem{proposition}{Proposition}
\newtheorem{theorem}{Theorem}
\newtheorem{corollary}{Corollary}
\newtheorem{remark}{Remark}
\newtheorem{example}{Example}

\title{Initial pre-algebras as a generalization of dendriform algebras}

\author{P. S. Kolesnikov}

\address{Sobolev Institute of Mathematics, Novosibirsk, Russia}

\email{pavelsk77@gmail.com}

\author{B. K. Sartayev$^{*}$}

\address{Narxoz University, Almaty, Kazakhstan and SDU University, Kaskelen, Kazakhstan}

\email{baurjai@gmail.com}

\keywords{dendriform algebra, operad, free algebra}

\subjclass[2020]{17A30, 17A50, 16R10}

\thanks{${}^{*}$Corresponding author: Bauyrzhan Sartayev   (baurjai@gmail.com)}

\begin{document}

\begin{abstract}
We continue the study of \emph{initial dialgebras} defined in~\cite{DMS2026}.
For a binary operad $\Var$ we define 
the class of initial pre-$\Var$-algebras and the corresponding operad $\pre\Var^{\I}$
in such a way that 
\[
(\di\Var^{\I})^{!}=(\pre(\Var^{!}))^{\I}
\]
in the case when $\Var $ is quadratic.
We propose an intuitive algorithm for finding 
the defining relations of the operad $\pre\Var^{\I}$ in the case when $\Var$ is a binary quadratic operad.
We also study free initial pre-algebras in the associative and commutative settings. For the nonsymmetric operad $\pre\As^{\I}$, we construct a Gröbner--Shirshov basis in the free magma operad, describe a linear basis in terms of admissible decorated planar binary trees, and establish a bijection between these trees and certain combinatorial objects.
\end{abstract}

\maketitle

\section{Introduction}

The theory of \emph{dialgebras} and their \emph{pre}-analogues originates from the problem of constructing
nonsymmetric and skew-symmetric counterparts of classical algebraic structures.
In the 1990s, Loday introduced \emph{associative dialgebras}, that is, algebras with two binary products
satisfying additional axioms, as a natural framework for Leibniz algebras--non-skew-symmetric analogues of
Lie algebras~\cite{Loday,LodPir,LV}. Any associative dialgebra produces a Leibniz algebra by combining its two products,
in the same way as an associative algebra produces a Lie algebra via commutators.
From the operadic viewpoint, the operad of associative dialgebras is Koszul dual to the operad of
\emph{dendriform algebras}~\cite{LV}. Dendriform algebras, also introduced by Loday, are equipped with two
operations whose sum is associative; equivalently, they provide a \emph{pre-associative} splitting of the
associative law. This reflects a general principle: many quadratic varieties admit dual ``di'' and ``pre''
versions. For instance, \emph{pre-Lie} (left-symmetric) algebras are characterized by the property that the
commutator bracket is Lie, whereas \emph{Zinbiel} algebras (``Leibniz'' spelled backwards) are those whose
symmetrized product is associative and commutative. Thus, dialgebraic and pre-algebraic structures are
naturally organized by operads, Koszul duality, and Manin products~\cite{LV}. In particular, for a quadratic
operad $\Var$ one has
\[
(\di\Var)^{(!)}=(\Var\circ\Perm)^{(!)}=\Var^{(!)}\bullet\pre\Lie=\pre(\Var^{(!)}).
\]

For a binary quadratic operad $\Var$, the operad $\di\Var$ is obtained by replication of
the product into two operations and imposing certain relations on them; at the level of polynomial identities,
this is closely related to KP-type algorithms (see \cite{Bremner2012}).
Dually, the splitting procedure leads to \emph{pre}-algebras that form a class $\pre\Var$, where the defining operations
refine the multiplication in $\Var$ so that an appropriate (anti)symmetrization recovers an algebra in $\Var$.
The basic examples are classical: the dendriform (pre-associative) and Zinbiel (pre-commutative) operads are
Koszul dual to the diassociative and Leibniz operads, respectively~\cite{LV}.
In the general operadic setting, analogous splitting procedures producing pre-versions of operads were
developed in~\cite{BBGN13,GubKol-13}.

Recent results illustrate both the importance and the subtlety of this direction.
For example, in the vertex-algebraic setting the problem of preservation of the Dong property under replication and splitting
constructions exhibits delicate behaviour and yields counterexamples in the pre-Lie and dendriform cases~\cite{DongProp}. On the dialgebraic side, new embedding theorems and structural results continue to
appear; in particular, Novikov dialgebras and their relations to Perm algebras with derivations were studied
in~\cite{perm5,SartDzhMashNovikov}. Further results on pre-Novikov type structures were obtained in
\cite{GaoGuoHanZhang,KolMashSar,LiHongQFNov}. Various dialgebraic structures on the polynomial algebra
$\Bbbk[x]\otimes\Bbbk[x]$ were investigated in~\cite{4guys}, and new classes of dendriform-type algebras and
related constructions were considered in~\cite{dend1,dend2}.

In~\cite{DMS2026}, a class of \emph{initial dialgebras} was introduced, and a procedure was proposed for
deriving defining identities of an initial $\Var$-dialgebra from the defining identities of $\Var$.
The main motivation for introducing initial dialgebras is to obtain a necessary and sufficient condition
ensuring that every $\Var$-dialgebra satisfies the defining identity of $\Var$ with respect to the product
\[
a*b:=a\vdash b+a\dashv b.
\]
These developments indicate the necessity of an explicit operadic framework describing how to pass from $\Var$
to its initial dialgebraic and pre-analogues, and how these objects interact via Koszul duality.

The aim of the present paper is to advance this program in the initial associative and initial pre-Lie
dialgebraic settings, and to develop a general procedure producing \emph{initial} pre-$\Var$ operads for a
given binary operad $\Var$. Our approach combines explicit operadic computations, Koszul duality, and
rewriting techniques in the magma operad. The main results are as follows.

\medskip
\noindent\textbf{Operadic description of initial dialgebras.}
Given a binary operad $\Var$, we note that the class $\di\Var^{\I}$ of initial di-$\Var$-algebras is governed by an operad whose defining relations join those of $\di\Var = \Perm\medcirc \Var$ and $\Var^*:=\Lie\Adm\medbullet \Var$, 
where $\Perm$ and $\Lie\Adm $ are the well-known operads of left commutative associative and non-associative Lie-admissible algebras, respectively, $\medcirc $ and $\medbullet $
stand for the Manin white and black products, respectively.

\medskip
\noindent\textbf{A general definition of initial pre-algebras.}
Based on the duality between $\di\Var$ and $\pre\Var^!$ in the quadratic case, we define 
the operad
$\pre\Var^{\I}$ as Koszul dual to $\di(\Var^!)^{\I}$. This definition is then generalized to an arbitrary binary (not necessarily quadratic) operad $\Var$ assuming 
the desired relations form the intersection of the T-ideals of the varieties $\pre\Var$
and $\T\Perm\medcirc \Var$, where $\T\Perm=\Lie\Adm^!$.


\medskip
\noindent\textbf{A rewriting algorithm for quadratic identities.}
We propose an explicit algorithm to find the defining identities of the operad $\pre\Var^{\I}$ for a binary quadratic operad $\Var$. The algorithm does not require 
complete computation of the Koszul dual operad but still needs some linear algebra analysis of the space of quadratic defining relations of $\Var$.

\medskip
\noindent\textbf{Gr\"obner--Shirshov bases and combinatorics.}
For the nonsymmetric operad $\pre\text{-}\As^{\I}$, we construct a Gröbner--Shirshov basis in the free magma operad in terms of an explicit system of rewriting rules. This yields a linear basis consisting of admissible decorated planar binary trees. We further establish a bijection between these trees and lattice paths of type $F$, thereby proving that
\[
\dim(\pre\textrm{-}\As^\I(n))=A106228(n-1),n\geq 1,
\]
where $A106228$ is the corresponding OEIS sequence; see \cite{comb1,comb2,HuhKimSeoShin2024}. For the operad $\pre\text{-}\Com^{\I}$, an explicit description of a linear basis remains open. Nevertheless, computations in low degrees using the computer algebra package \cite{Albert} yield the sequence
\[
1,2,7,32,181,1232,9787,88832,907081,10291712,
\]
which coincides up to degree $10$ with the OEIS sequence A006154; see \cite{comb3,comb4}.

\medskip
Since varieties defined by polynomial identities of degrees $2$ and $3$ are in one-to-one correspondence with
quadratic operads, we use the same terminology for a variety $\Var$ and for the corresponding operad
throughout the paper. We denote by $\Com$ the variety of associative-commutative algebras. All algebras are
considered over a field $\Bbbk$ of characteristic $0$.

\section{Dialgebras and initial dialgebras}

In this section, we recall 
the definition of an initial $\di\Var $-algebra and derive the  defining relations
of Koszul dual operads to 
$\di\As$ and $\di\Lie$, where $\As$ and $\Lie$ correspond to associative and Lie algebras, respectively.

Suppose $\Var $ is a variety of algebras 
with a family of binary operations $\mu_i$, $i\in I$, satisfying some multi-linear identities. The corresponding operad (see, e.g., \cite{BremDots}) is also denoted $\Var $.

In particular, the variety $\Var=\Perm$
has one operation $\mu(x,y)=xy$ which is associative and left-commutative:
\[
(xy)z=x(yz), \quad x(yz)=y(xz).
\]

For every $P\in \Perm$ and $A\in \Var $
define the structure of an 
algebra with doubled family of operations $\mu_i^{1}, \mu_i^{2}$, $i\in I$,
in the following way:
\[
\mu_i^1(p\otimes a, q\otimes b)
  =qp\otimes \mu_i(a,b),
  \quad 
\mu_i^2(p\otimes a, q\otimes b)
  =pq\otimes \mu_i(a,b),
\]
for $a,b\in A$, $p,q\in P$.
The set of all identities that hold on all 
such algebras $P\otimes A$ (with all possible $P\in \Perm$ and $A\in \Var$) 
defines a variety of $\di\Var$-algebras.
The corresponding operad 
$\di\Var$ coincides with the Manin white product of operads $\Perm\medcirc \Var$, 
the latter coincides with their Hadamard product $\Perm\otimes \Var$ (see \cite{KolVar, Vallette2008}).

An algorithm to deduce the defining relations of the operad $\di\Var$
(i.e., the identities that define the class of $\di\Var$-algebras) works as follows.
First, we need {\em zero-identities} that do not depend on $\Var$:
\begin{equation}\label{eq:Zero-identities}
    \mu_i^2(\mu_j^1(x,y),z) = \mu_i^2(\mu_j^2(x,y),z) ,
    \quad 
    \mu_i^1(x,\mu_j^2(y,z)) = 
    \mu_i^1(x,\mu_j^1(y,z)),
\end{equation}
for all $i,j\in I$.
Next, 
for every multi-linear defining identity $f(x_1,\dots, x_n)=0$  of $\Var$ and for every $k=1,\dots,n$ consider the identity
\[
f^k(x_1,\dots, x_n) := f(x_1,\dots, \dot x_k,\dots , x_n)=0,
\]
where the terms $f^k$ are defined by an 
apparent inductive principle. 
Namely, if $f = \mu_i(g,h)$, where $x_k$ appears in $g$ and $x_l$ is any variable in $h$ then 
\[
f^k = \mu_i^1(g^k,h^l)
\]
(due to \eqref{eq:Zero-identities}, it does not depend on the choice of $l$).
Similarly, if $x_k$ appears in $h$ and $x_l$ is a variable in $g$ then 
\[
f^k = \mu_i^2(g^l, h^k).
\]
Then $\di\Var $ is defined by the set of identities \eqref{eq:Zero-identities} along with all $f^k(x_1,\dots, x_n)$, $k=1,\dots, n$, where $f$ ranges the set of defining identities of $\Var$.

\begin{definition}[\cite{DMS2026}]\label{defn:di-initial}
A $\di\Var$-algebra $A$ with operations 
$\mu_i^1$, $\mu_i^2$, $i\in I$, 
is said to be {\em initial}
if the operations 
\[
\mu_i^* = \mu_i^1 + \mu_i^2,\quad i\in I,
\]
satisfy the identities of $\Var $.
\end{definition}

The class of all initial $\di\Var$-algebras 
is denoted $\di\Var^{\I}$.

\begin{example}
If $\Var = \Lie$ is the variety of Lie algebras with one skew-symmetric operation 
$\mu(x,y) = [x,y]$ satisfying the Jacobi identity then $\di\Lie^{\I}$
consists of Lie-admissible Leibniz algebras.
\end{example}

\begin{example}
An algebra with two operations
$\vdash $ and $\dashv $ is an initial $\di\As$-algebra if and only if it satisfies the following identities:
\begin{equation}\label{eq:Zero-dash}
(x_1\dashv x_2)\vdash x_3=(x_1\vdash x_2)\vdash x_3,
\quad
x_1\dashv(x_2\dashv x_3)=x_1\dashv(x_2\vdash x_3),
\end{equation}
\[
(x_1\dashv x_2)\dashv x_3=x_1\dashv(x_2\dashv x_3),
\quad 
(x_1\vdash x_2)\vdash x_3= x_1\vdash(x_2\vdash x_3),
\]
\[
(x_1\vdash x_2)\dashv x_3=x_1\vdash(x_2\dashv x_3)
\]
and
\begin{equation}\label{as1}
(x_1\vdash x_2)\vdash x_3=x_1\dashv (x_2\dashv x_3).    
\end{equation}
\end{example}

Hence, 
\eqref{as1} is the only identity 
we need to distinguish $\di\As$ and $\di\As^{\I}$.

\begin{proposition}[\cite{DMS2026}]\label{prop:diAS-I!}
The Koszul dual operad
$(\di\As^{\I})^!$ is generated by 
two operations $\prec $ and $\succ $
satisfying the following relations:
\begin{multline}\label{dend1}
(x_{1} \succ x_{2}) \succ x_{3} + (x_{1} \prec x_{2}) \succ x_{3}
- x_{1} \succ (x_{2} \succ x_{3})  \\
\qquad + (x_{1} \prec x_{2}) \prec x_{3}
- x_{1} \prec (x_{2} \succ x_{3})
- x_{1} \prec (x_{2} \prec x_{3})=0
\end{multline}
and
\begin{equation}\label{dend2}
x_1\succ(x_2\prec x_3) = (x_1\succ x_2)\prec x_3 .
\end{equation}
\end{proposition}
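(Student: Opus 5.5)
The plan is a direct linear-algebra computation with nonsymmetric quadratic operads. The identities listed in the Example above contain no permutations of variables, so $\di\As^{\I}$ is a nonsymmetric binary quadratic operad with generators $\vdash,\dashv$. Its weight-$3$ component of the free operad is spanned by the $8$ monomials $(x_1\,\alpha\, x_2)\,\beta\, x_3$ and $x_1\,\alpha\,(x_2\,\beta\, x_3)$, $\alpha,\beta\in\{\vdash,\dashv\}$. Let $R$ be the span of the six relations: the two zero-identities \eqref{eq:Zero-dash}, the three associativity-type identities of $\di\As$, and \eqref{as1}.

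First I would check that these six relations are linearly independent, which is immediate since each involves a different pair of monomials. Hence $\dim R=6$, and the dual relation space $R^\perp$ has dimension $8-6=2$. This matches the two relations \eqref{dend1} and \eqref{dend2} in the statement.

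Next I fix the dual generators, with $\prec$ dual to $\dashv$ and $\succ$ dual to $\vdash$. I also fix the standard pairing for nonsymmetric operads: a left-comb monomial pairs with its dual left comb with coefficient $+1$, a right-comb monomial pairs with its dual right comb with coefficient $-1$, and all other pairings are $0$. The same convention gives $\di\As^!=\mathrm{Dend}$.

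\textbf{Step 1.} Recall (or quickly recheck) that the annihilator of the five $\di\As$ relations is the $3$-dimensional space spanned by the dendriform relations
\[
D_1=(x_1\prec x_2)\prec x_3-x_1\prec(x_2\prec x_3)-x_1\prec(x_2\succ x_3),
\]
\[
D_2=(x_1\succ x_2)\prec x_3-x_1\succ(x_2\prec x_3),
\]
\[
D_3=(x_1\prec x_2)\succ x_3+(x_1\succ x_2)\succ x_3-x_1\succ(x_2\succ x_3).
\]
This reduces the problem to a small check: $R^\perp$ is exactly the subspace of $\langle D_1,D_2,D_3\rangle$ orthogonal to \eqref{as1}.

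\textbf{Step 2.} Pair a general element $aD_1+bD_2+cD_3$ with the relation $(x_1\vdash x_2)\vdash x_3-x_1\dashv(x_2\dashv x_3)$.
\begin{itemize}
\item $D_2$ contains neither $(x_1\succ x_2)\succ x_3$ nor $x_1\prec(x_2\prec x_3)$, so it contributes $0$.
\item $D_3$ contributes $c$, from the left comb $(x_1\succ x_2)\succ x_3$.
\item $D_1$ contributes $a\cdot(-1)\cdot(-1)\cdot(-1)=-a$, from the right comb $x_1\prec(x_2\prec x_3)$.
\end{itemize}
The orthogonality condition is therefore $a=c$, with $b$ free.

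So $R^\perp$ is spanned by $D_2$ and $D_1+D_3$. Written out, $D_1+D_3$ is precisely \eqref{dend1}, and $D_2=0$ is \eqref{dend2}.

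The only delicate point is the sign convention in the pairing. If the right-comb sign were taken to be $+1$, the result would be $D_1-D_3$ instead. I would therefore verify the convention against the known case, checking that it reproduces the dendriform relations as $\di\As^!$, before applying it to \eqref{as1}.
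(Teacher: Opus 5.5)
Your computation is correct, but it takes a different route from the paper. The paper does not compute an annihilator directly. It invokes the Ginzburg--Kapranov criterion: the relations on $\prec,\succ$ are exactly those needed for the skew-symmetric bracket on $y\otimes x$ (with $\succ$ paired against $\vdash$ and $\prec$ against $\dashv$) to satisfy the Jacobi identity for $x_i$ in the free $\di\As^{\I}$-algebra. That argument lives in the symmetric-operad setting with all signs built in. This is why the paper can obtain Proposition~\ref{prop:diLie-I!} ``in a very similar way''.

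You instead use two facts: $\di\As^{\I}$ is nonsymmetric, and its relation space is $R_{\di\As}+\Bbbk\cdot(\ref{as1})$. Hence $R^\perp=\langle D_1,D_2,D_3\rangle\cap(\ref{as1})^\perp$. This reduces the whole computation to the single condition $a=c$. It also makes visible why every dendriform algebra is an initial $\pre\As$-algebra.

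The price is that you must fix the sign convention of the nonsymmetric pairing ($+1$ on left combs, $-1$ on right combs). Your plan to calibrate it against $\di\As^!=\mathrm{Dend}$ is the right safeguard. Under that convention, $D_1,D_2,D_3$ are indeed orthogonal to the five $\di\As$ relations, $D_1$ pairs with (\ref{as1}) to $-1$ and $D_3$ to $+1$, and $D_1+D_3$ is exactly (\ref{dend1}).

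One small imprecision: ``each relation involves a different pair of monomials'' does not by itself imply linear independence. Differences $u-v$ are independent only when the graph they form on the monomials has no cycle. That does hold here, since the six edges form a forest on the eight monomials. You do not actually need $\dim R=6$, though, because the intersection computation already produces a $2$-dimensional space.
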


\begin{proof}
This is a straightforward computation based 
on the following criterion \cite{GK94}
(see also \cite{DongProp}):
the dual operations $\prec $ and $\succ $
satisfy exactly the identities  needed for the skew-symmetric bracket
\begin{multline*}
[y_1\otimes x_1,\, y_2\otimes x_2] \\
= (y_1\succ y_2)\otimes(x_1\vdash x_2)
-(y_2\succ y_1)\otimes(x_2\vdash x_1)
+(y_1\prec y_2)\otimes(x_1\dashv x_2)
-(y_2\prec y_1)\otimes(x_2\dashv x_1)
\end{multline*}
to satisfy the Jacobi identity
(for $x_i$ in the free $\di\As^{\I}$-algebra).
\end{proof}

One may easily see that every 
 $(\di\As^{\I})^!$-algebra
 is associative under the operation
\[
x*y := x\succ y + x\prec y,
\]
as happens for the smaller class 
$\pre\As = (\di\As)^!$ of {\em dendriform}
algebras.

In a very similar way, we may check the following statement.

\begin{proposition}[\cite{DMS2026}]\label{prop:diLie-I!}
The Koszul dual operad
$(\di\Lie^{\I})^!$ is generated by 
one operation 
satisfying the following relations:
\begin{equation}\label{Zinb1}
(x_1,x_2,x_3) - (x_3,x_2,x_1) = (x_2 x_3)x_1 -(x_2x_1)x_3,
\end{equation}
and
\begin{equation}\label{Zinb2}
x_1(x_2x_3) = x_2(x_1x_3).
\end{equation}
\end{proposition}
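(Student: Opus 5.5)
The plan is to follow the proof of Proposition~\ref{prop:diAS-I!}. First I will write down the quadratic relations of $\di\Lie^{\I}$ explicitly, then compute their annihilator under the Ginzburg--Kapranov pairing on the $12$-dimensional space of degree~$3$ multilinear monomials in one nonsymmetric binary operation.

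\emph{The relations of $\di\Lie^{\I}$.} For $\Var=\Lie$ the skew-symmetry $[x,y]=-[y,x]$ doubles into $\mu^1(x,y)=-\mu^2(y,x)$. So $\di\Lie$ is generated by a single operation $xy:=\mu^2(x,y)$. The zero-identities \eqref{eq:Zero-identities}, together with the $f^k$ coming from the Jacobi identity, reduce to the (left) Leibniz identity
\[
x_1(x_2x_3)=(x_1x_2)x_3+x_2(x_1x_3).
\]
Its $S_3$-orbit spans a $6$-dimensional subspace of the $12$-dimensional space $\mathcal M(3)$. By the Example following Definition~\ref{defn:di-initial}, initiality adds exactly one relation: the Jacobi identity for the commutator $x*y=xy-yx$. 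This relation is fully skew-symmetric, so its orbit is $1$-dimensional. I will check that it is not a consequence of the Leibniz identity; this is easy, e.g.\ because the Leibniz identity forces $(xx)y=0$ but not Lie-admissibility. Hence the relation space $R\subset\mathcal M(3)$ has $\dim R=7$, and the relation space of $(\di\Lie^{\I})^!$ must have dimension $12-7=5$.

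\emph{Orthogonality and dimension count.} I will use the criterion of \cite{GK94} in the form used in the proof of Proposition~\ref{prop:diAS-I!}. An algebra $B$ belongs to $(\di\Lie^{\I})^!$ iff the bracket
\[
[b_1\otimes a_1,b_2\otimes a_2]=b_1b_2\otimes a_1a_2-b_2b_1\otimes a_2a_1
\]
on $B\otimes F$ satisfies the Jacobi identity, where $F$ is the free $\di\Lie^{\I}$-algebra. Expanding the Jacobi identity gives $12$ terms $\pm\, m(b)\otimes m(a)$, one for each monomial type. Reducing the $a$-parts modulo the $7$ relations of $R$ to a normal form leaves a $5$-dimensional space. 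Collecting the coefficients of the normal-form monomials yields $5$ linear relations on the $b$-monomials. Equivalently, I will verify directly that every element of the $S_3$-orbits of \eqref{Zinb1} and \eqref{Zinb2} pairs to zero with the orbit of the Leibniz identity and with the commutator Jacobi identity, using the sign-twisted pairing $\langle\sigma(\mu\circ_i\mu),\tau(\mu\circ_j\mu)\rangle$.

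\emph{The expected obstacle.} Orthogonality is a finite check. The main work is showing that the orbits of \eqref{Zinb1} and \eqref{Zinb2} together span exactly a $5$-dimensional space, so that they give all of $R^\perp$.
\begin{itemize}
\item Relation \eqref{Zinb2} is symmetric in $x_1,x_2$, so its orbit has dimension $3$.
\item Relation \eqref{Zinb1} is skew in $x_1\leftrightarrow x_3$, so its orbit has dimension at most $3$.
\end{itemize}
This gives at most $6$, so I expect a single linear dependency between the two orbits, namely that the alternating sum of \eqref{Zinb1} lies in the span of the orbit of \eqref{Zinb2}. I will exhibit this dependency explicitly and then check by a rank computation on the $12$ monomials that the remaining span is $5$-dimensional. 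I expect the sign bookkeeping in this pairing and rank computation to be the only delicate point.
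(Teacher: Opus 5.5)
Your proposal is correct and takes the same route as the paper. The paper only says the statement is checked ``in a very similar way'' to Proposition~\ref{prop:diAS-I!}, that is, via the Ginzburg--Kapranov criterion applied to the Jacobi identity of the bracket on $B\otimes F$, and your bookkeeping fills in that computation accurately: $\dim R=7$, $\dim R^\perp=5$, and the two $3$-dimensional orbits of \eqref{Zinb1} and \eqref{Zinb2} lie in $R^\perp$ and meet exactly in the sign-isotypic line.

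One justification is loose. Saying that the Leibniz identity forces $(xx)y=0$ does not by itself show that Lie-admissibility is independent of the Leibniz identity. The fix is immediate: modulo the Leibniz identity, Lie-admissibility reduces to
\[
\sum_{\mathrm{cyc}}(x_1x_2)x_3=\sum_{\mathrm{cyc}}\bigl(x_1(x_2x_3)-x_2(x_1x_3)\bigr)=0,
\]
and this is visibly nonzero in the free left Leibniz algebra, which has basis $x_{\sigma(1)}(x_{\sigma(2)}x_{\sigma(3)})$.
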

Here $(x,y,z)= (xy)z-x(yz)$ stands for the associator of elements $x,y,z$.

In particular, it follows from \eqref{Zinb1} and \eqref{Zinb2} that 
the operation $x*y = xy+yx$ in a $(\di\Lie^{\I})^!$-algebra is associative.

\section{The generalization of pre-Var algebras}

In this section, we
define initial pre-algebras as Koszul dual to initial dialgebras in the quadratic case. 
We also discuss operadic constructions of 
initial dialgebras and pre-algebras in order to expand the notion 
to arbitrary binary operads.

Recall that, given a multilinear variety 
$\Var $ of algebras with a family of bilinear operations $\mu_i$, $i\in I$, 
the variety of $\pre\Var$-algebras 
is constructed as follows \cite{GubKol_decorated2014}. 
An algebra $A$ with double family of operations 
$\mu_i^\prec$, $\mu_i^\succ$, $i\in I$,
is said to be a $\pre\Var$-algebra 
if for every $\Perm$-algebra $P$ the space $P\otimes A$ equipped with bilinear operations
\[
\mu_i(p\otimes a, q\otimes b)
 = pq\otimes \mu_i^\succ (a,b) + qp\otimes \mu_i^{\prec }(a,b)
\]
is a $\Var$-algebra. 

In \cite{BBGN13,GubKol-13}, an explicit algorithm for finding the identities of 
$\pre\Var$ was described. Suppose 
the identities of $\Var $ are generated by some subset $S$ of multilinear identities.
Then for every $f(x_1,\dots, x_n)\in S$ and for every $k=1,\dots, n$, consider the identity $f^{(k)}(x_1,\dots, x_k)$ obtained as follows. Suppose $f =\mu_i(g,h)$. 
If $x_k$ appears in $h$ then 
$f^{(k)} = \mu_i^\succ (g^{(*)}, h^{(k)})$, 
where $g^{(*)}$ is obtained from $g$ by replacing each $\mu_j$ with $\mu_j^* = \mu_j^\prec + \mu_j^\succ$. If $x_k$ appears in $g$ then, similarly, 
$f^{(k)} = \mu_i^\prec (g^{(k)}, h^{(*)})$.

The variety $\pre\Var$ is then defined by all 
$f^{(k)}(x_1,\dots, x_n)$, $k=1,\dots, n$, 
$f\in S$.

It follows from the definition (setting $P=\Bbbk $) that every $\pre\Var$-algebra 
with respect to the operations 
$\mu_i^*$, $i\in I$, is a $\Var$-algebra.

In the case when $\Var $ is a binary quadratic operad, we have 
$\pre\Var = \LSym\medbullet \Var$, 
the Manin black product of operads \cite{BBGN13}. Here $\LSym $ denotes the operad corresponding to the variety of left-symmetric algebras also called pre-Lie algebras. 

\begin{definition}\label{defn:pre-Var-initial}
Suppose $\Var $ is a binary quadratic operad. 
Then algebras from the variety corresponding to the operad $\pre\Var^{\I} := (\di(\Var^!)^{\I})^!$ 
are called {\em initial $\pre\Var$-algebras}. 
\end{definition}

Since the variety of initial $\di\Var$-algebras is embedded into the class of all $\di\Var$-algebras, the Koszul dual varieties are related in an inverse way:
every $\pre\Var$-algebra is an initial 
$\pre\Var$-algebra. 

\begin{example}\label{exmp:AsCom_initial}
Proposition~\ref{prop:diAS-I!} implies that
$\pre\As^{\I}$ is defined by \eqref{dend1} and \eqref{dend2}. Similarly, 
by Proposition~\ref{prop:diLie-I!}
$\pre\Com^{\I}$
is defined by \eqref{Zinb1} and \eqref{Zinb2}.
\end{example}

\begin{example}
Consider the variety $\Var=\NAP$
with one nonassociative right commutative 
operation: 
\[
(x_1x_2)x_3 = (x_1x_3)x_2.
\]
The Koszul dual operad $\NAP^!$ is 
defined by the identities
\[
(x_1x_2)x_3=(x_1x_3)x_2,\quad 
  x_1(x_2x_3)=0.
\]
\end{example}

The variety of $\di\NAP^{\I}$-algebras 
is defined by the zero-identities
\eqref{eq:Zero-dash}
along with 
\[
(x_1\dashv x_2)\dashv x_3 = (x_1\dashv x_3)\dashv x_2,
\quad 
(x_1\vdash x_2) \dashv x_3 = (x_1 \vdash x_3)\vdash x_2,
\]
and 
\[
(x_1\vdash x_2)\vdash x_3 = (x_1\vdash x_3)\vdash x_2.
\]

By Definition~\ref{defn:pre-Var-initial}, the variety of initial $\pre\NAP^{\I}$-algebras is Koszul dual to $\di\NAP^{\I}$. 
The latter is defined by
\[
(u*v)*w=(u*w)*v,\qquad u*(v*w)=0,
\]
where $x*y:=x\succ y+x\prec y$,
and
\[
(u \prec v) \prec w - (u \prec w) \prec v=0,\qquad
u \succ (w \prec v)=0,\qquad
u \succ (w \succ v)=0.
\]
Reducing the first two identities modulo the three additional relations above, we obtain
\[
(u\succ v)\succ w+(u\succ v)\prec w+(u\prec v)\succ w
-(u\succ w)\succ v-(u\succ w)\prec v-(u\prec w)\succ v=0
\]
and
\[
u\prec(v\succ w)+u\prec(v\prec w)=0.
\]

The aim of this section is to generalize Definition~\ref{defn:pre-Var-initial}
to an arbitrary binary operad $\Var$ to get 
a class of $\pre\Var^{\I}$-algebras which is more general than $\pre\Var$.

In the case $\Var=\Com$, the operad $\di\Com^{\I}$ coincides with the two-sided perm operad $\T\Perm$.
It is defined by associativity
together with the symmetry of the ternary product:
\[
x_1x_2x_3 = x_2x_1x_3 = x_1x_3x_2.
\]
Although in general (see \cite{DMS2026})
\[
\Var\medcirc\T\Perm \neq \di\Var^{\I},
\]
the operad $\T\Perm$ plays an essential role in defining the operad $\pre\Var^{\I}$.

Note that $\T\Perm^! = \Lie\Adm$, the operad of Lie-admissible algebras defined by the single identity 
\[
\sum\limits_{\sigma\in S_3}(-1)^\sigma 
(x_{\sigma(1)}, x_{\sigma(2)}, x_{\sigma(3)}) = 0.
\]

Suppose $\Var $ is a multi-linear variety of algebras with a family of binary operations $\mu_i$, $i\in I$. 
Without loss of generality, assume the family of operations is symmetric in the following sense: for every $i\in I$ there exists $j\in I$ such that $\mu_i^{(12)}=\pm \mu_j$. 
For example, the variety of associative algebras presented in this form has two operations $\mu (x,y)= xy$ and $\mu^{(12)}(x,y) = yx$.

In other words, one may consider the binary operad $\Var $ corresponding to this variety and choose $\mu_i$, $i\in I$, to be a linear basis of $\Var(2)$.

Consider the double family of operations $\mu_i^1,\mu_i^2$, $i\in I$, assume 
$(\mu_i^1)^{(12)} = \pm \mu_j^2$ provided that $\mu_i^{(12)}=\pm \mu_j$. In other words, we consider the tensor product of $S_2$-spaces
$\Var^*(2) = \Bbbk S_2\otimes \Var(2)$
so that $e\otimes \mu_i = \mu_i^2$, 
$(12)\otimes \mu_i = \mu_i^1$.

Denote by $\Var^*$ the variety of all algebras with operations from $\Var^*(2)$
such that the sums $\mu_i^* = \mu_i^1 +\mu_i^2$ satisfy the identities of $\Var $.

\begin{remark}
If $\Var $ is a binary quadratic operad then $\Var^* = \Lie\Adm\medbullet \Var $.
\end{remark}

It follows immediately from the algorithm of finding relations of the Manin black product 
of operads \cite{DongProp}.

By definition, the variety of $\di\Var^{\I}$-algebras is the intersection of classes
$\di\Var$ and $\Var^*$.
Hence, the T-ideal of $\di\Var^{\I}$
is a sum of the T-ideals for $\di\Var$ and 
for $\Var^*$.
Therefore, in order to get the identities of a dual variety, we need to consider the {\em intersection}
of T-ideals defining the dual classes.

The dual class to di-algebras is the variety 
$\pre\Var$ with operations $\mu_i^\succ$,
$\mu_i^\prec $, $i\in I$.
For the second class $\Var^*$, the dual construction is naturally based 
on the Manin white product of $\Var $ and $\Lie\Adm^! = \T\Perm$. 

\begin{lemma}
Let $\Var $ be a binary operad generated by 
$\mu_i$, $i\in I$, a basis of $\Var(2)$. Then $\Var\medcirc \T\Perm=\Var\otimes \T\Perm $
is generated by $\mu_i^\succ = \mu_i\otimes e, \mu_i^\prec = \mu_i\otimes (12)$, 
and $f$ is a relation of $\Var\medcirc \T\Perm$ if and only if its projection 
$\bar f$ on the first tensor factor 
is a relation of $\Var $.
\end{lemma}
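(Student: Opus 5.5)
The plan is to realize $\T\Perm$ concretely and reduce everything to a statement about Hadamard products. First I will show that $\T\Perm(n)$ is one-dimensional for every $n\geq 1$, spanned by the class of a monomial $x_1x_2\cdots x_n$, with the symmetric group acting trivially. In $\T\Perm$ the ternary products $x_{\sigma(1)}x_{\sigma(2)}x_{\sigma(3)}$ all coincide, since the transpositions $(12)$ and $(23)$ generate $S_3$. Associativity then propagates this to all degrees, so $\dim\T\Perm(n)\leq 1$. For the reverse inequality, the one-variable polynomial algebra $t\Bbbk[t]$ satisfies all the defining relations, so no monomial vanishes. In arity $2$, however, $\T\Perm(2)=\Bbbk S_2$ is free: $x_1x_2$ and $x_2x_1$ are distinct, because the relations start in degree $3$. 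Consequently $\T\Perm$ is the operad whose component in arity $n\geq3$ is the trivial module $\Bbbk$, and whose arity-$2$ component is the regular module, spanned by $e$ and $(12)$.

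Next I will use the fact, quoted in the paper from \cite{KolVar, Vallette2008}, that the white product with a binary operad of this type coincides with the Hadamard product, just as $\Perm\medcirc\Var=\Perm\otimes\Var$. I will justify this directly. The Hadamard product $\Var\otimes\T\Perm$ is generated in arity $2$, because its arity-$2$ component is $\Var(2)\otimes\Bbbk S_2$, spanned by $\mu_i\otimes e=\mu_i^\succ$ and $\mu_i\otimes(12)=\mu_i^\prec$. Every element of $(\Var\otimes\T\Perm)(n)$ has the form $u\otimes m$ with $m$ the unique basis monomial. Every composition of the generators maps onto it: a tree monomial $\tau$ in the $\mu_i^{\succ},\mu_i^{\prec}$ maps to $\bar\tau\otimes m$, where $\bar\tau$ replaces both $\mu_i^\succ$ and $\mu_i^\prec$ by $\mu_i$. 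These elements span, since $\Var$ is generated by the $\mu_i$.

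The relation criterion then follows directly. The free operad on the generators $\mu_i^\succ,\mu_i^\prec$ maps onto $\Var\otimes\T\Perm$ via $f\mapsto\bar f\otimes m$, using linearity and the collapse of $\T\Perm(n)$ to one dimension for $n\geq3$. Hence $f$ lies in the kernel if and only if $\bar f=0$ in $\Var(n)$, that is, if and only if $\bar f$ is a relation of $\Var$. In arity $2$ nothing is lost, because $\Var(2)\otimes\Bbbk S_2$ is exactly the span of the generators.

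The main obstacle is checking that the tensor-product action, and in particular the sign convention $(\mu_i^\succ)^{(12)}=\pm\mu_j^\prec$, is compatible with the collapse to $\bar f$. I also need the identification of the Manin white product with the Hadamard product in the sense of the paper: its relations are determined in all arities by this kernel. Here I will simply invoke \cite{KolVar, Vallette2008} rather than reprove it.
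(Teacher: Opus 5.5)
Your proposal is correct and is essentially the paper's argument. The paper also handles arity $2$ via $\Var(2)\otimes\Bbbk S_2$. For $n\ge 3$ it uses $\dim\T\Perm(n)=1$ to write $f(x_1\otimes p_1,\dots,x_n\otimes p_n)=\bar f(x_1,\dots,x_n)\otimes p_1\cdots p_n$, which is your $f\mapsto\bar f\otimes m$ phrased on algebras rather than on the free operad. Your added details (the dimension count via $t\Bbbk[t]$ and generation in arity $2$) are sound. Two small fixes: restrict your opening claim to $n\neq 2$, as you yourself do later. The sign issue you flag is automatic, since $v\otimes\sigma\mapsto v$ is $S_2$-equivariant.
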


\begin{proof}
For $n=2$, it follows from the symmetric structure on $\Var(2)\otimes \Bbbk S_2$.
For $n>2$ we have $\dim(\T\Perm(n))=1$, so 
$f(x_1\otimes p_1,\dots , x_n\otimes p_n)= 
\bar f(x_1,\dots, x_n)\otimes p_1\dots p_n$
for all $x_k$ in a $\Var$-algebra and for all $p_k$ in a $\T\Perm$-algebra. 
\end{proof}

Hence, $\Var\medcirc \T\Perm $ is defined by all those identities in $\mu_i^\succ$,
$\mu_i^\prec $ that remain valid 
in $\Var $ after removing all superscripts 
$\succ, \prec$.

This observation leads us to the following 

\begin{definition}\label{defn:preVar-InitialGeneral}
An algebra with operations 
$\mu_i^\succ, \mu_i^\prec $, $i\in I$, 
is an {\em initial $\pre\Var$-algebra} if 
it satisfies all multi-linear identities $f(x_1,\dots, x_n)$
of $\pre\Var$-algebras such that 
$\bar f$ holds on $\Var $.
\end{definition}

Denote by $\pre\Var^{\I}$ the class of all initial $\pre\Var$-algebras. 
In the case when $\Var$ is quadratic, this definition coincides 
with Definition~\ref{defn:pre-Var-initial}.

All the constructions discussed above can be summarized in the following diagram of T-ideals:

\begin{center}
\setlength{\unitlength}{1pt}
\begin{picture}(420,95)

\put(45,80){$T(\di\Var)$}
\put(45,35){$T(\pre\Var)$}
\put(150,80){$T(\di\Var^{\I})=T(\Var\medcirc\Perm)+T(\Var\medbullet\Lie\Adm)$}
\put(150,35){$T(\pre\Var^{\I})= T(\Var\medbullet\LSym)\cap T(\Var\medcirc\T\Perm)$}

\put(120,80){$\subset$}
\put(120,35){$\supset$}

\put(75,74){\vector(0,-1){24}}
\put(75,50){\vector(0, 1){24}}

\put(175,74){\vector(0,-1){24}}
\put(175,50){\vector(0, 1){24}}

\end{picture}
\end{center}

\vspace*{-\baselineskip}
\vspace*{-\baselineskip}

\begin{proposition}\label{prop:Star-prod}
In every $\pre\Var^{\I}$-algebra, the operations $\mu_i^*=\mu_i^\succ + \mu_i^{\prec}$, $i\in I$, satisfy the identities of $\Var $.
\end{proposition}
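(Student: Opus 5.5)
Fix a multilinear identity $g(x_1,\dots,x_n)=0$ of $\Var$, written in the operations $\mu_i$. By Definition~\ref{defn:preVar-InitialGeneral}, it is enough to show that the polynomial $f:=g^{(*)}$ belongs to the T-ideal $T(\pre\Var^{\I})$. Here $g^{(*)}$ denotes the result of replacing every $\mu_i$ in $g$ by $\mu_i^*=\mu_i^\succ+\mu_i^\prec$. So we need two things: $f$ is an identity of all $\pre\Var$-algebras, and $\bar f$ holds on $\Var$.

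\emph{Step 1: $f$ holds in every $\pre\Var$-algebra.} We use the remark after the construction of $\pre\Var$. Let $A$ be a $\pre\Var$-algebra and take $P=\Bbbk$ with its trivial $\Perm$ structure $pq=p q$ (scalars). Then $P\otimes A\cong A$, and the operation $\mu_i$ on $P\otimes A$ becomes $\mu_i^\succ+\mu_i^\prec=\mu_i^*$. By definition, $P\otimes A$ is a $\Var$-algebra, so $g$ vanishes on $(A,\mu_i^*)$. This says exactly that $f=g^{(*)}$ vanishes on $A$.

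\emph{Step 2: $\bar f$ holds on $\Var$.} Removing the superscripts $\succ,\prec$ sends $\mu_i^\succ\mapsto\mu_i$ and $\mu_i^\prec\mapsto\mu_i$. Hence $\mu_i^*\mapsto 2\mu_i$. Because $g$ is multilinear of degree $n$, every monomial contains exactly $n-1$ operations, and so $\bar f=2^{n-1}g$. Since $g$ is an identity of $\Var$ and $\operatorname{char}\Bbbk=0$, $\bar f$ is an identity of $\Var$ as well.

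One point must be checked before this computation is legitimate: the projection $f\mapsto\bar f$ of the Lemma has to be a well-defined linear map compatible with the symmetric structure. The Lemma's convention $(\mu_i^1)^{(12)}=\pm\mu_j^2$ says that transposing arguments swaps $\succ$ and $\prec$. Replacing $\mu_i$ by $\mu_i^{(12)}$ therefore turns $\mu_i^*$ into $(\mu_i^{(12)})^*$, so the substitution $g\mapsto g^{(*)}$ does not depend on how $g$ is presented in terms of the basis $\mu_i$.

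Steps 1 and 2 together show that $f$ is one of the identities in Definition~\ref{defn:preVar-InitialGeneral}, so it holds in every initial $\pre\Var$-algebra, which proves the proposition. The only step that needs any care is the compatibility check just described; the rest is direct from the definitions.
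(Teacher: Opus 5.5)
Your proof is correct and is essentially the paper's argument: $g^{(*)}$ is an identity of $\pre\Var$ (by taking $P=\Bbbk$), and $\overline{g^{(*)}}=2^{n-1}g$ holds on $\Var$, so $g^{(*)}$ is among the identities imposed in Definition~\ref{defn:preVar-InitialGeneral}. You also add an equivariance check (transposition swaps $\succ$ and $\prec$, so $(\mu_i^*)^{(12)}=\pm\mu_j^*$), which the paper does not spell out and which is harmless; note that it actually comes from $\mu_i^\succ=\mu_i\otimes e$, $\mu_i^\prec=\mu_i\otimes(12)$ in the Lemma, not from the di-operation convention $(\mu_i^1)^{(12)}=\pm\mu_j^2$ that you cite.
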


\begin{proof}
Let $f$ be a multilinear identity of $\Var $. 
It is well known that $f^*$ obtained from $f$ by replacing each $\mu_i$ with $\mu_i^*$  
is an identity of $\pre\Var$. On the other hand, 
$\overline{f^*} = 2^{n-1} f$, where $n=\deg f$.
Hence, $f^*$ holds on $\pre\Var^{\I}$. 
\end{proof}

Let us present the general algorithm of finding all defining relations of $\pre\Var^{\I}$ for a binary quadratic operad $\Var $.

Suppose $\mu_i$, $i\in I$, is a basis of $\Var(2)$, i.e., 
the family of operations (together with their opposites, as above).
Then the $S_3$-space $R$ of defining relations of $\Var $
lies in the $S_3$-module $E$ generated by all compositions
$\mu_i(\mu_j \otimes 1) = (x_1\circ_j x_2)\circ_ i x_3$.
Practically, 
\begin{equation}\label{eq:E-decomp}
E = E_1\oplus E_2\oplus E_3, 
\end{equation}
where $E_3$ is spanned by 
$\mu_i(\mu_j \otimes 1)$,
$E_2$
 is spanned by 
$\mu_i(\mu_j \otimes 1)^{(23)} = (x_1\circ_j x_3)\circ_i x_2$,
and
$E_1$
 is spanned by 
$\mu_i(\mu_j \otimes 1)^{(13)} = (x_3\circ_j x_2)\circ_i x_1$.
(It is enough to state only left-justified brackets since 
the operations $\mu_i^{(12)}$ also belong to the basis.)
In other words, $E_k$ is the linear span of all monomials of degree 3 with ``external'' letter $x_k$, $k=1,2,3$. In particular, 
\[
\pi_1(f) + \pi_2(f) + \pi_3(f) = f
\]
for every $f\in E$.

Consider the projections $\pi_k : R\to E_k$ along the decomposition
\eqref{eq:E-decomp}. Then for every $f\in R$ the set
$\pi_k^{-1}[\pi_k (f)]$ consists of all $h\in R$ with the same summand of monomials with external letter~$x_k$.

\begin{theorem}\label{identitiesPreI}
Suppose $\Var $ is a binary quadratic operad with the $S_3$-module of defining relations $R$. Then the space of defining relations for 
$\pre\Var^{\I}$ consists of 
\[
f_1^{(1)} + f_2^{(2)} + f_3^{(3)}, \quad f_k\in \pi_k^{-1}[\pi_k (f)] = f+\ker\pi_k,
\ f\in R,\ i=1,2,3.
\]
\end{theorem}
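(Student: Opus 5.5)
We use the description of $T(\pre\Var^{\I})$ as the intersection $T(\pre\Var)\cap T(\Var\medcirc\T\Perm)$ from Definition~\ref{defn:preVar-InitialGeneral}. For quadratic $\Var$ it suffices to compute the degree-$3$ component. Let $\mathcal E$ be the $S_3$-module of all degree-$3$ compositions of the doubled operations $\mu_i^\succ,\mu_i^\prec$. It splits as $\mathcal E=\mathcal E_1\oplus\mathcal E_2\oplus\mathcal E_3$ by external letter, and this splitting is compatible with the projection $g\mapsto\bar g$ onto $E=E_1\oplus E_2\oplus E_3$.

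\textbf{The $\pre\Var$ relations in $\mathcal E$.} By the algorithm of \cite{BBGN13,GubKol-13}, the degree-$3$ relations of $\pre\Var$ are spanned by the $f^{(k)}$, $f\in R$, $k=1,2,3$. Each of these can be written out monomially as follows.

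A monomial $(x_a\circ_j x_b)\circ_i x_c$ of $f$ contributes to $f^{(k)}$ in one of two ways.
\begin{itemize}
\item If $k=c$, i.e.\ $x_k$ is external, it contributes $\mu_i^\succ(\mu_j^*,x_c)$.
\item Otherwise it contributes $\mu_i^\prec(\mu_j^{\succ\text{ or }\prec},x_c)$, the inner choice depending on whether $x_k$ sits in the right or left slot.
\end{itemize}

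Hence $f^{(k)}$ is determined by $\pi_k(f)$ together with $f$. The key observation is what happens under the bar map: $\overline{\mu^\succ}=\overline{\mu^\prec}=\mu$ and $\overline{\mu^*}=2\mu$. This gives
\[
\overline{f^{(k)}}=2\pi_k(f)+\sum_{l\ne k}\pi_l(f)=f+\pi_k(f).
\]
I would verify this small computation first.

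\textbf{Containment of the stated elements.} Take $f_k\in f+\ker\pi_k$ with $f_k\in R$. Each $f_k^{(k)}$ lies in $T(\pre\Var)$, so the sum lies there as well. For the bar we get
\[
\sum_k\overline{f_k^{(k)}}=\sum_k\bigl(f_k+\pi_k(f_k)\bigr)=\sum_k f_k+\sum_k\pi_k(f)=\sum_k f_k+f,
\]
since $\pi_k(f_k)=\pi_k(f)$. This lies in $R$, so by the Lemma the sum lies in $T(\Var\medcirc\T\Perm)$. Thus all the stated elements belong to the intersection.

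\textbf{Reverse inclusion (main obstacle).} Let $g=\sum_k g_k^{(k)}$ be an arbitrary element of $T(\pre\Var)(3)$. The assignment $f\mapsto f^{(k)}$ is linear, so such a decomposition exists, with $g_k\in R$ and with each $g_k$ determined by $g$ modulo the kernel of $f\mapsto f^{(k)}$. Suppose in addition that $\bar g=\sum_k(g_k+\pi_k g_k)\in R$. Since $\sum_k g_k\in R$, this is equivalent to $\sum_k\pi_k(g_k)\in R$.

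Write $h=\sum_k\pi_k(g_k)\in R$. Then $\pi_k(h)=\pi_k(g_k)$, so $g_k\in h+\ker\pi_k$, which is exactly the stated form with $f=h$.

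The delicate points are the following.
\begin{itemize}
\item The decomposition $g=\sum g_k^{(k)}$ must be well defined enough. Any ambiguity lies in the kernels of $f\mapsto f^{(k)}$, and I would check that these kernels vanish or are harmless by using injectivity of the monomial assignment above.
\item The $S_3$-action permutes the indices $k$, so the spanning set must be shown to be an $S_3$-submodule. It is, because $\sigma$ maps $f+\ker\pi_k$ to $\sigma f+\ker\pi_{\sigma(k)}$.
\end{itemize}

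Finally, since both sides are quadratic, the degree-$3$ component generates, and this gives the theorem.
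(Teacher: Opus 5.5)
Your proposal is correct and follows the paper's proof essentially step for step: you establish $\overline{f^{(k)}}=f+\pi_k(f)$, sum the bars for the forward inclusion, and for the converse write the element as $\sum_k g_k^{(k)}$ with $g_k\in R$ and take $f=\sum_k\pi_k(g_k)\in R$. The two ``delicate points'' you flag are not needed: the converse works for any choice of decomposition, and the stated set is automatically an $S_3$-submodule because you show it equals the degree-$3$ component of $T(\pre\Var)\cap T(\Var\medcirc\T\Perm)$.
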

\begin{proof}
By the definition of dendriform splitting, 
$\overline{h^{(k)}} = h + \pi_k(h)$, for $k=1,2,3$.
Hence, if $\Phi = f_1^{(1)} + f_2^{(2)} + f_3^{(3)}$ 
as in the statement then 
\[
\overline{\Phi } = f_1 + \pi_1(f) + f_2 + \pi_2(f) + f_3 + \pi_3(f)
 = f_1+f_2+f_3+f \in R,
\]
so $\Phi $ holds in $\pre\Var^{\I}$. 

On the other hand, 
every $\Phi$ that holds on $\pre\Var^{\I}$ is of the form 
\[
\Phi = f_1^{(1)} + f_2^{(2)} + f_3^{(3)}
\]
for some $f_k\in R$. At the same time, $\overline{\Phi }$ 
holds of $\Var$, 
\[
\overline{\Phi } = f_1 + \pi_1(f_1) + f_2 + \pi_2(f_2) + f_3 + \pi_3(f_3) \in R,
\]
so 
$f = \pi_1(f_1) + \pi_2(f_2) + \pi_3(f_3) \in R$.
Therefore, 
$f_k\in \pi_k^{-1}[\pi_k(f)]$, as desired.
\end{proof}

In particular, if $f\in R\cap (E_2\oplus E_3) = \ker\pi_1$, 
i.e., $x_1$ is an ``inner'' letter of $f$, 
then one may choose $f_1=0$, $f_2=f_3=f$, 
so $f^{(2)}+f^{(3)}$ is an identity of $\pre\Var^{\I}$. Since $f^*=f^{(1)}+f^{(2)}+f^{(3)}$ also holds on $\pre\Var^{\I} $
by Proposition~\ref{prop:Star-prod}, 
we have $f^{(1)}$ is an identity on $\pre\Var^{\I}$. 
Similarly, 
if $f\in R\cap (E_i\oplus E_j)$ then 
$f^{(k)}$ holds on $\pre\Var^{\I}$ for 
the remaining $k\in \{1,2,3\}\setminus \{i,j\}$.

As a result, we approach an algorithm to deduce the defining relations of $\pre\Var^{\I}$ for a binary quadratic operad $\Var $. First, for each defining relation $f$ of $\Var$ consider $f^*$. 
Second, for each $f\in \ker\pi_k$, $k=1,2,3$, 
consider $f^{(k)}$.
Note that in the second step one has to find a linear basis of $\ker\pi_1$ modulo the transposition $(23)$
since $R$ is an $S_3$-module.

\begin{example}\label{exmp:Lie}
For $\Var=\Lie$, the space $R$ is spanned by
\[
x_1(x_2x_3)+x_2(x_3x_1)+x_3(x_1x_2)
\]
in the 3-dimensional space of monomials, 
so  $\ker\pi_1=\{0\}$.
Hence, $\pre\Lie^{\I}$ coincides with 
the operad of Lie-admissible algebras.
\end{example}

\begin{example}\label{exmp:Nilp}
Suppose $\Var=\mathrm{RNilp}$ is generated by non-commutative operation with one defining relation 
$x_1(x_2x_3)=0$. 
Then a basis of $\ker\pi_1$ modulo $(23)$ consists of two polynomials
$g=x_2(x_1x_3)$ and $h=x_2(x_3x_1)$.
Thus, 
$\pre\mathrm{RNilp}^{\I}$ 
is defined by 
\[
x_1*(x_2*x_3),\quad x_2\succ (x_1\prec x_3), 
x_2\succ (x_3\succ x_1).
\]
\end{example}

Here, as above, $x*y = x\succ y + x\prec y$.

\begin{example}\label{exmp:Alt}
Let 
$\Var=\Alt$ be the operad of alternative algebras generated by one non-commutative operation satisfying 
\[
(x_1,x_2,x_3) = -(x_2,x_1,x_3),\quad 
(x_1,x_2,x_3)=-(x_1,x_3,x_2).
\]
Here $(a,b,c)=(ab)c-a(bc)$ is the associator. Alternative-type identities and
their relation to binary perm algebras were recently considered in \cite{NAP}.
It is well known that 
$\dim R=5$, and it is straightforward to find that $\dim\ker\pi_1=1$. 
For example,
$f=(x_2,x_1,x_3)+(x_3,x_1,x_2)\in R$, 
so this is a basis of $\ker\pi_1$.
Hence, 
$\pre\Alt^{\I}$
is generated by two operations $x_1\succ x_2$, 
$x_1\prec x_2$ 
satisfying 
\[
(x_1,x_2,x_3)^*+(x_3,x_2,x_1)^*,
\quad 
(x_1,x_2,x_3)^{(2)}+(x_3,x_2,x_1)^{(2)},
\quad
(x_1,x_2,x_3)^*+(x_1,x_3,x_2)^*.
\]
\end{example}

\begin{example}\label{exmp:Nov}
For $\Var=\Nov$, the operad of Novikov algebras,
the $S_3$-module $R$ is generated by  
\[
(x_1,x_2,x_3) = (x_2,x_1,x_3),
\quad 
(x_1 x_2)x_3=(x_1 x_3)x_2.
\]
It is well known that $\dim R=6$, and it is easy to find that $\dim\ker\pi_1=2$,
\[
g = (x_2x_1)x_3 - x_2(x_3x_1)+(x_3x_1)x_2-x_3(x_2x_1),
\quad 
h=(x_1x_2)x_3 - (x_1x_3)x_2
\]
form a linear basis of $\ker\pi_1$.
Hence, $\pre\Nov^{\I}$ is defined by 
\[
\begin{gathered}
(x_1,x_2,x_3)^* = (x_2,x_1,x_3)^*,
\quad 
(x_1 *x_2)*x_3=(x_1 *x_3)*x_2, \\
g^{(1)} = (x_2\succ x_1)\prec x_3 - x_2\succ (x_3\succ x_1)+(x_3\succ x_1)\prec x_2-x_3\succ (x_2\succ x_1),
\\
h^{(1)}=(x_1\prec x_2)\prec x_3 - (x_1\prec x_3)\prec x_2.
\end{gathered}
\]

Indeed, the operad $\di\Nov^\I$ is defined by 
\[
\begin{gathered}
(x_1\dashv x_2)\vdash x_3=(x_1\vdash x_2)\vdash x_3,\;
x_1\dashv(x_2\vdash x_3)=x_1\dashv(x_2\dashv x_3),\\
(x_1\vdash x_2)\vdash x_3-x_1\vdash(x_2\vdash x_3)
=
(x_2\vdash x_1)\vdash x_3-x_2\vdash(x_1\vdash x_3),\\
(x_1\dashv x_2)\dashv x_3-x_1\dashv(x_2\dashv x_3)=
(x_2\vdash x_1)\dashv x_3-x_2\vdash(x_1\dashv x_3),\\
(x_1\dashv x_2)\dashv x_3=(x_1\dashv x_3)\dashv x_2,\;
(x_1\vdash x_2)\dashv x_3=(x_1\vdash x_3)\vdash x_2,\\
(x_1\vdash x_2)\vdash x_3-x_1\dashv(x_2\dashv x_3)=
(x_2\vdash x_1)\vdash x_3-x_2\dashv(x_1\dashv x_3),\\
(x_1\vdash x_2)\vdash x_3=(x_1\vdash x_3)\vdash x_2.
\end{gathered}
\]
For Theorem \ref{identitiesPreI}, it follows that
\[
\langle R_{\di\Nov^{\I}},R_{\pre\Nov^{\I}}\rangle=0.
\]
It remains to note that
\[
\dim(\di\Nov^{\I}(3))+\dim(\pre\Nov^{\I}(3))=12+36=48,
\]
which implies $(\di\Nov^{\I})^!=\pre\Nov^{\I}$.
\end{example}

In some cases, the operad $\pre\Var^{\I}$ may be presented as just a Manin product of $\Var$ and another fixed operad.

\begin{definition}
Let $\Var $ be a binary operad. Let us say $\Var $ is {\em good} if $\Var\medcirc \Zinb = \pre\Var$. (In general, we have just a morphism 
$\pre\Var\to \Var\medcirc \Zinb$.)
A binary operad $\Var$ is called {\em doog}, if 
$\Var\medbullet \Leib = \di\Var$. 
\end{definition}

In the quadratic case, $\Var$ is good if and only if $\Var^!$ is doog. For example, the operads $\As$, $\Lie$, $\Com$, $\Alt$, $\Nov $
are good and doog, the operad $\NAP$ is not good, but $\NAP^!$ is good.

\begin{proposition}\label{prop:doog-diVar}
If $\Var$ is 
a binary quadratic operad which is doog
then $\di\Var^{\I} = \di\Lie^{\I}\medbullet \Var$.
\end{proposition}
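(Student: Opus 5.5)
We compare defining relations on both sides, using the decomposition of the T-ideal $T(\di\Var^{\I}) = T(\di\Var) + T(\Var^*)$ recorded in the diagram above. By the doog hypothesis, $\di\Var = \Var\medbullet\Leib$. By the Remark, $\Var^* = \Var\medbullet \Lie\Adm$. Both are Manin black products of $\Var$ with a fixed operad, so the task reduces to a single claim:
\[
\Var\medbullet\Leib \,+\, \Var\medbullet\Lie\Adm \;=\; \Var\medbullet(\di\Lie^{\I}),
\]
with the sum of operads understood as the sum of their relation spaces.

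\textbf{Step 1: the target operad.} By definition, $\di\Lie^{\I}$ is the class of Leibniz algebras that are Lie-admissible; this is the Example following Definition~\ref{defn:di-initial}. Hence its quadratic relation space is $R_{\Leib} + R_{\Lie\Adm}$ inside the free operad on one nonsymmetric binary operation. The generating sets match: $\Leib$, $\Lie\Adm$ and $\di\Lie^{\I}$ all have a one-nonsymmetric-operation generator $\Bbbk S_2$. So $\Var\medbullet\Leib$, $\Var\medbullet\Lie\Adm$ and $\Var\medbullet\di\Lie^{\I}$ all live on the same generators $\Var(2)\otimes \Bbbk S_2$, which is consistent with $\mu_i^1,\mu_i^2$.

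\textbf{Step 2: additivity of the black product.} Recall that for binary quadratic operads $\mathcal P=\mathcal F(V)/(R)$ and $\mathcal Q=\mathcal F(W)/(S)$, the black product is $\mathcal F(V\otimes W\otimes \mathrm{sgn})/(\Psi(R\otimes S))$. Here $\Psi$ is the natural embedding of $\mathcal F(V)(3)\otimes \mathcal F(W)(3)$-relations into $\mathcal F(V\otimes W)(3)$, given by composing tree by tree; equivalently one uses the explicit algorithm of \cite{DongProp}. Since $\Psi$ is linear in the second argument, we get
\[
\Psi(R_\Var\otimes (S_1+S_2)) = \Psi(R_\Var\otimes S_1)+\Psi(R_\Var\otimes S_2).
\]
Taking $S_1=R_{\Leib}$ and $S_2=R_{\Lie\Adm}$ yields the displayed claim at the level of relation spaces. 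This gives
\[
\di\Var^{\I} = \di\Var\cap\Var^* = \Var\medbullet\di\Lie^{\I}
\]
as quadratic operads. Since $\di\Var^{\I}$ is the intersection of two varieties, its T-ideal is generated by the union of the quadratic relations of both, and therefore it is quadratic.

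\textbf{Main obstacle.} The delicate point is the convention of the black product, in which the relation space is the image of $R\otimes S$. In the alternative convention via Koszul duality, $\mathcal P\medbullet\mathcal Q = (\mathcal P^!\medcirc\mathcal Q^!)^!$, and the white product relation space is an intersection-type object; additivity is then not obvious. I would therefore verify additivity in the convention $\mathcal P\medbullet\mathcal Q$ whose relations are $\Psi(R\otimes S)$, as used in \cite{Vallette2008}, and check that the identifications $\di\Var=\Var\medbullet\Leib$ (the doog definition) and $\Var^*=\Lie\Adm\medbullet\Var$ (the Remark) are stated in that same convention. Alternatively, one can dualize: it suffices to show $(\di\Lie^{\I})^! = \pre\Com^{\I}$ and
\[
\Var^!\medcirc\pre\Com^{\I} = \pre(\Var^!)^{\I},
\]
the latter being the intersection of T-ideals of $\Var^!\medcirc\Zinb=\pre\Var^!$ (by goodness of $\Var^!$) and $\Var^!\medcirc\T\Perm$. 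This route requires the white product to commute with intersections of relation spaces, which again follows from the Hadamard description $\Var^!\otimes(-)$ applied arity-wise. I would take whichever convention makes the linearity evident and cite \cite{Vallette2008} for the equivalence.
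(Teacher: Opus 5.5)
Your argument is correct and is essentially the paper's proof. Both use $\di\Lie^{\I}=\Leib\cap\Lie\Adm$, linearity of the black product in the quadratic relation space, the doog hypothesis for $\di\Var$, the Remark for $\Var^*$, and $\di\Var^{\I}=\di\Var\cap\Var^*$. Your identity $\Psi(R\otimes(S_1+S_2))=\Psi(R\otimes S_1)+\Psi(R\otimes S_2)$ is exactly the linearity the paper invokes, so the dual white-product route is unnecessary; note that only linearity of $\Psi$ is used, and $\Psi$ is not an embedding.
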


\begin{proof}
Recall that $\di\Lie=\Leib$.
Moreover, by definition of initial Lie dialgebras, the operad
$\di\Lie^{\I}$ is obtained by imposing, in addition to the Leibniz identity,
the Lie-admissibility identity. Hence
\[
\di\Lie^{\I}=\Leib\cap\Lie\Adm .
\]
Equivalently, at the level of defining $T$-ideals,
\[
T(\di\Lie^{\I})=T(\Leib)+T(\Lie\Adm).
\]

Since the Manin black product is
linear on the quadratic relation space, we obtain
\[
\di\Lie^{\I}\medbullet\Var
=
(\Leib\cap\Lie\Adm)\medbullet\Var
=
(\Leib\medbullet\Var)\cap(\Lie\Adm\medbullet\Var).
\]

Since $\Var$ is doog, and by the definition of the operad $\Var^*$ for a binary
quadratic operad,
\[
\Var^*=\Lie\Adm\medbullet\Var.
\]
we have
\[
\di\Lie^{\I}\medbullet\Var=\di\Var\cap\Var^*.
\]

Finally, the class of initial $\di\Var$-algebras is precisely the intersection
of the class of $\di\Var$-algebras with the class $\Var^*$.
\end{proof}


\begin{corollary}
If a binary quadratic operad $\Var $
is good then 
\[
\pre\Var^{\I} = (\di(\Var^!)^{\I})^! = 
(\di\Lie^{\I}\medbullet \Var^!)^! = \pre\Com^{\I} \medcirc \Var .
\]
\end{corollary}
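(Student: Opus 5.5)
The corollary is a chain of three equalities, and I prove them from left to right using Definition~\ref{defn:pre-Var-initial}, Proposition~\ref{prop:doog-diVar}, and the standard compatibility of Koszul duality with Manin products.

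\textbf{First equality.} This is just Definition~\ref{defn:pre-Var-initial}. Since $\Var$ is binary quadratic, so is $\Var^!$, and by definition $\pre\Var^{\I}=(\di(\Var^!)^{\I})^!$.

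\textbf{Second equality.} Here I apply Proposition~\ref{prop:doog-diVar} to the operad $\Var^!$, which requires knowing that $\Var^!$ is doog. This follows from the remark after the definition of good and doog operads: in the quadratic case, $\Var$ is good if and only if $\Var^!$ is doog. To justify that remark I dualize the identity $\Var\medcirc\Zinb=\pre\Var$. I use $(\mathcal P\medcirc\mathcal Q)^!=\mathcal P^!\medbullet\mathcal Q^!$, $\Zinb^!=\Leib$, and $(\pre\Var)^!=\di(\Var^!)$, the latter being the duality $(\di\Var)^{!}=\pre(\Var^{!})$ from the introduction applied to $\Var^!$. Dualizing gives $\Var^!\medbullet\Leib=\di(\Var^!)$, which is exactly the doog condition for $\Var^!$. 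Proposition~\ref{prop:doog-diVar} then yields $\di(\Var^!)^{\I}=\di\Lie^{\I}\medbullet\Var^!$, and taking Koszul duals of both sides gives the second equality.

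\textbf{Third equality.} I apply $(\mathcal P\medbullet\mathcal Q)^!=\mathcal P^!\medcirc\mathcal Q^!$ together with $(\Var^!)^!=\Var$, which gives
\[
(\di\Lie^{\I}\medbullet\Var^!)^!=(\di\Lie^{\I})^!\medcirc\Var .
\]
It remains to identify $(\di\Lie^{\I})^!$ with $\pre\Com^{\I}$. Since $\Com^!=\Lie$, Definition~\ref{defn:pre-Var-initial} gives $\pre\Com^{\I}=(\di(\Com^!)^{\I})^!=(\di\Lie^{\I})^!$. This agrees with Example~\ref{exmp:AsCom_initial} and Proposition~\ref{prop:diLie-I!}.

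\textbf{Main obstacle.} The only delicate point is the bookkeeping of the Manin product conventions, namely the ordering of factors and the commutativity of $\medcirc$ and $\medbullet$ up to isomorphism. All operads involved are binary quadratic, so $\medcirc$ and $\medbullet$ are Koszul dual to each other and commutative. These are standard facts, so the argument is a formal composition of dualities.
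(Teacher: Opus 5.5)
Your proposal is correct and follows the argument the paper leaves implicit. The first equality is Definition~\ref{defn:pre-Var-initial}. The second comes from Proposition~\ref{prop:doog-diVar} applied to $\Var^!$, which is doog because $\Var$ is good; your dualization of $\Var\medcirc\Zinb=\pre\Var$ justifies that remark. The third uses Koszul duality of the Manin products together with $\pre\Com^{\I}=(\di\Lie^{\I})^!$.
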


\section{Free initial pre-associative and pre-commutative algebras}

In this section, we describe the free initial pre-associative and pre-commutative
algebras
\[
\pre\text{-}\As^{\I}\langle X\rangle
\quad \text{and} \quad
\pre\text{-}\Com^{\I}\langle X\rangle,
\]
respectively. Since the operad $\pre\text{-}\As^{\I}$ is non-symmetric, we regard
it as an operad defined by a system of rewriting rules. For more details on non-symmetric operads, see \cite{Loday,MashSart2026}. More precisely,
$\pre\text{-}\As^{\I}$ is generated by two binary operations, denoted by $x$ and
$y$, corresponding to $\prec$ and $\succ$, respectively, and is subject to the
following rewriting rules:
\begin{equation}\label{InPreAs1}
x\bigl(y(*\; *)\; *\bigr)
\rightsquigarrow
y\bigl(*\; x(*\; *)\bigr),
\end{equation}
and
\begin{equation}\label{InPreAs2}
\begin{aligned}
y\bigl(y(*\; *)\; *\bigr)
\rightsquigarrow{}&
-\,y\bigl(x(*\; *)\; *\bigr)
-x\bigl(x(*\; *)\; *\bigr)  \\
&+y\bigl(*\; y(*\; *)\bigr)
+x\bigl(*\; y(*\; *)\bigr)
+x\bigl(*\; x(*\; *)\bigr).
\end{aligned}
\end{equation}

\begin{lemma}
In the operad $\pre\text{-}\As^{\I}$, the following identity holds:
\begin{multline*}
x\bigl(x(x(*\; *)\; *)\; *\bigr)=
x\bigl(x(*\; y(*\; *))\; *\bigr)
+x\bigl(x(*\; x(*\; *))\; *\bigr)
+x\bigl(x(*\; *)\; x(*\; *)\bigr)\\
-\,x\bigl(*\; y(*\; x(*\; *))\bigr)
-x\bigl(*\; x(*\; x(*\; *))\bigr).
\end{multline*}
\end{lemma}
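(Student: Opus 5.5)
The plan is to rewrite the identity with $x=\prec$ and $y=\succ$, introduce a suitable ternary expression, and derive everything from the two defining relations \eqref{dend1} and \eqref{dend2} of Example~\ref{exmp:AsCom_initial}. In this notation \eqref{InPreAs1} is \eqref{dend2}, $(a\succ b)\prec c=a\succ(b\prec c)$, and \eqref{InPreAs2} is \eqref{dend1}. With $a*b=a\succ b+a\prec b$, I write \eqref{dend1} as
\[
A(a,b,c)=-B(a,b,c),
\]
where
\[
A(a,b,c):=(a\prec b)\prec c-a\prec(b*c),\qquad B(a,b,c):=(a*b)\succ c-a\succ(b\succ c).
\]
Expanding $(a*b)\succ c$ and $a\prec(b*c)$ and moving everything to one side recovers exactly \eqref{dend1}, so this is only a regrouping.

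Next I read the claimed identity with inputs $a,b,c,d$. The left side minus the first two terms on the right is $\bigl((a\prec b)\prec c\bigr)\prec d-\bigl(a\prec(b*c)\bigr)\prec d=A(a,b,c)\prec d$. The remaining right-hand terms are $(a\prec b)\prec(c\prec d)-a\prec\bigl(b*(c\prec d)\bigr)=A(a,b,c\prec d)$. So the lemma is equivalent to
\[
A(a,b,c)\prec d=A(a,b,c\prec d).
\]

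By \eqref{dend1} it suffices to prove $B(a,b,c)\prec d=B(a,b,c\prec d)$. For this I apply \eqref{dend2} to each term of $B(a,b,c)\prec d$. The first term gives $\bigl((a*b)\succ c\bigr)\prec d=(a*b)\succ(c\prec d)$. The second needs \eqref{dend2} twice: $\bigl(a\succ(b\succ c)\bigr)\prec d=a\succ\bigl((b\succ c)\prec d\bigr)=a\succ\bigl(b\succ(c\prec d)\bigr)$. Subtracting gives exactly $B(a,b,c\prec d)$, which proves the lemma.

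There is no real obstacle once the regrouping into $A$ and $B$ is found. The only care needed is the bookkeeping: checking that \eqref{dend1} is precisely $A=-B$, and matching the tree notation $x(\cdot\,\cdot)$, $y(\cdot\,\cdot)$ term by term with the infix expressions, including signs.
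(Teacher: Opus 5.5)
Your proof is correct. The regrouping checks out: \eqref{dend1} is exactly $A=-B$, the lemma is exactly $A(a,b,c)\prec d=A(a,b,c\prec d)$, and three applications of \eqref{dend2} give $B(a,b,c)\prec d=B(a,b,c\prec d)$.

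Your route differs from the paper's. The paper extracts the identity from the self-composition of \eqref{InPreAs2} at $y\bigl(y(y(*\;*)\;*)\;*\bigr)$, that is, by reducing $((a\succ b)\succ c)\succ d$ in two ways. That needs repeated further applications of \eqref{InPreAs2}, including inside right subtrees, together with \eqref{InPreAs1}, and then many cancellations before the six surviving terms give the identity. The paper only asserts the outcome of this computation.

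In Gr\"obner--Shirshov terms, your argument is the other degree-$4$ critical composition, namely \eqref{InPreAs1} with \eqref{InPreAs2} at $x\bigl(y(y(*\;*)\;*)\;*\bigr)=((a\succ b)\succ c)\prec d$. Reducing it via \eqref{InPreAs1} at the root pushes $\prec d$ inside, which is your identity $B(a,b,c)\prec d=B(a,b,c\prec d)$. The discrepancy between the two reductions is then precisely $A(a,b,c)\prec d-A(a,b,c\prec d)$.

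Your grouping is short, fully checkable by hand, and transparent. It shows the new relation says only that the ``$\prec$-associator'' $A$ inherits from $B$ compatibility with right $\prec$-multiplication in its last argument. The paper's choice fits its subsequent theorem, where both degree-$4$ compositions must be checked anyway and are claimed to reduce to \eqref{InPreAs3}. Your computation verifies that claim explicitly for one of them.
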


\begin{proof}
Consider the critical self-composition of \eqref{InPreAs2} corresponding to the
monomial
\[
y\bigl(y(y(*\; *)\; *)\; *\bigr).
\]
Reducing this monomial in the two possible ways by using \eqref{InPreAs2}, and
then applying \eqref{InPreAs1} and \eqref{InPreAs2} to the resulting terms, we
obtain precisely the stated identity.
\end{proof}

\begin{theorem}
The rewriting rules \eqref{InPreAs1} and \eqref{InPreAs2}, together with the rule
\begin{multline}\label{InPreAs3}
x\bigl(x(x(*\; *)\; *)\; *\bigr)
\rightsquigarrow
x\bigl(x(*\; y(*\; *))\; *\bigr)
+x\bigl(x(*\; x(*\; *))\; *\bigr)
+x\bigl(x(*\; *)\; x(*\; *)\bigr)\\
-\,x\bigl(*\; y(*\; x(*\; *))\bigr)
-x\bigl(*\; x(*\; x(*\; *))\bigr),
\end{multline}
form a Gr\"obner--Shirshov basis in the magma operad.
\end{theorem}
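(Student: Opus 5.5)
The plan is the Composition--Diamond lemma for nonsymmetric operads in the free magma operad on two binary generators $x,y$. Every composition (overlap) of the leading monomials of the rules \eqref{InPreAs1}, \eqref{InPreAs2}, \eqref{InPreAs3} must reduce to zero modulo the three rules. As a consistency check I would then compare the count of normal forms with the expected dimensions.

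The first step is to fix a monomial order under which the left-hand sides are the leading terms. I would use a path-lexicographic or deg-lex order on planar binary trees in which left combs dominate right combs, and in which, at the root, $y$ is preferred over $x$ for left-heavy shapes. The requirements are:
\begin{itemize}
\item $y(y(*\,*)\,*)$ dominates every term on the right of \eqref{InPreAs2};
\item $x(y(*\,*)\,*)$ dominates $y(*\,x(*\,*))$;
\item $x(x(x(*\,*)\,*)\,*)$ dominates the right side of \eqref{InPreAs3}.
\end{itemize}
The terms $y(x(*\,*)\,*)$ and $x(x(*\,*)\,*)$ are left-leaning of the same shape as the leading word of \eqref{InPreAs2}, so the order must compare labels. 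A natural choice is the weight order: first count the left-spine length, then compare operation labels read along the left spine, with $y>x$ at the root. I would check these three inequalities directly.

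The second step is to enumerate the critical compositions. The leading monomials all have the form $a(b(\cdot)\,*)$, with the inner operation in the left input. Overlaps therefore arise only along left spines, where the inner vertex of one leading word is the root of another. These are the left-comb trees of arity 4 and 5:
\begin{itemize}
\item arity 4: $x(y(y(*\,*)\,*)\,*)$, $y(y(y(*\,*)\,*)\,*)$, $x(x(y(*\,*)\,*)\,*)$, and $y(x(y\ldots))$ if that is a leading word;
\item arity 4 and 5 overlaps involving \eqref{InPreAs3}: $x(x(x(x(*\,*)\,*)\,*)\,*)$ with self-overlap, $x(x(x(y(*\,*)\,*)\,*)\,*)$, and $y(x(x(x\ldots)))$-type trees (not leading, since no rule has root $y$ over $x$).
\end{itemize}
The composition $y(y(y))$ was already resolved in the preceding lemma, and it produced \eqref{InPreAs3}. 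Each remaining composition I would reduce in both ways to normal form and check that the two results agree. These are finite computations in arity at most 5, best done with computer assistance, for example with the package \cite{Albert} or a hand-written reduction.

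The third step, as a sanity check, is to count the normal forms, meaning trees avoiding the three leading patterns as left-spine subtrees. In arity 3 I expect 8 trees, since $\dim(\pre\As^{\I}(3))=10-2=8$. Arity 4 should match the dimension computed independently from the operad. This count also feeds the later enumeration via A106228.

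I expect the main obstacle to be the arity-5 compositions involving \eqref{InPreAs3}. Its right-hand side contains terms with $x$ over $x$ over a right subtree, and these can create new occurrences of left-spine patterns after substitution, so the reductions become long. I would organise the computation by pushing all $y$-roots rightward first with \eqref{InPreAs1}, then applying \eqref{InPreAs2}, and finally \eqref{InPreAs3}. If a composition fails to vanish, the remainder would be a new rule, and the statement would need amending.
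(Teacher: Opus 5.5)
\textbf{Gap: the list of critical compositions is incomplete.} Your strategy is the paper's: apply the Composition--Diamond lemma and check every overlap of the leading monomials $x(y(*\,*)\,*)$, $y(y(*\,*)\,*)$ and $x(x(x(*\,*)\,*)\,*)$. The problem is how you enumerate those overlaps.

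All three leading monomials are left combs, so they overlap like words along the left spine. Two occurrences may share any nonempty common segment, not only ``the inner vertex of one is the root of the other''. For $x(x(x(\cdot)))$ with itself there are therefore two overlaps:
\begin{itemize}
\item sharing two vertices, which gives the arity-$5$ tree you list;
\item sharing a single vertex, which gives the arity-$6$ comb $x(x(x(x(x(*\,*)\,*)\,*)\,*)\,*)$.
\end{itemize}
So your claim that all compositions live in arity at most $5$ is false. The diamond lemma cannot be invoked until this arity-$6$ composition is handled, and the paper treats it explicitly as its last case.

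You can close the gap in either of two ways:
\begin{itemize}
\item reduce the arity-$6$ composition directly, as the paper does; or
\item use a chain criterion. The middle occurrence of $x(x(x(\cdot)))$ (vertices $2$--$4$) overlaps both outer ones. Hence the arity-$6$ composition equals $(\text{arity-}5\text{ self-composition})\circ_1 x(*\,*)$ plus $x(\cdot\,*)\circ_1(\text{arity-}5\text{ self-composition})$. It is therefore trivial modulo lower terms as soon as the arity-$5$ one is.
\end{itemize}

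In the other direction, $x(x(y(*\,*)\,*)\,*)$ and $y(x(y\ldots))$ are not compositions at all, since only one leading monomial occurs in each. The complete list is the paper's five:
\begin{itemize}
\item arity $4$: $x(y(y))$ and $y(y(y))$;
\item arity $5$: $x(x(x(y)))$ and $x(x(x(x)))$;
\item arity $6$: the comb above.
\end{itemize}

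\textbf{Smaller points.} Your sanity check is miscounted. The arity-$3$ component of the free nonsymmetric operad on two binary generators has $2\cdot 2^2=8$ monomials. The expected number of normal forms is therefore $8-2=6$, matching the six trees of $\mathcal A_3$, not $10-2=8$. As written, the check would report a spurious discrepancy.

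On the positive side, you are right to fix a monomial order, which the paper leaves implicit. Your ``left-spine length, then left-spine labels'' is only the first key of the path-lexicographic order (deg-lex on path words, with $y>x$). That order is admissible, and the path word of the leftmost leaf alone ($yy$, $xy$, $xxx$) already makes the stated left-hand sides leading in \eqref{InPreAs2}, \eqref{InPreAs1} and \eqref{InPreAs3}.
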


\begin{proof}
It is enough to check all critical compositions of the leading monomials of
\eqref{InPreAs1}--\eqref{InPreAs3}.

In degree $4$, there are two critical compositions:
\begin{itemize}
    \item the composition of \eqref{InPreAs1} with \eqref{InPreAs2} at the
    monomial
    \[
    x\bigl(y(y(*\; *)\; *)\; *\bigr);
    \]
    
    \item the self-composition of \eqref{InPreAs2} at the monomial
    \[
    y\bigl(y(y(*\; *)\; *)\; *\bigr).
    \]
\end{itemize}
In both cases, after applying \eqref{InPreAs1} and \eqref{InPreAs2}, the
corresponding composition reduces to the relation \eqref{InPreAs3}. Hence all
degree $4$ compositions are trivial modulo
\eqref{InPreAs1}--\eqref{InPreAs3}.

In degree $5$, there are two remaining critical compositions:
\begin{itemize}
    \item the composition of \eqref{InPreAs1} with \eqref{InPreAs3} at the
    monomial
    \[
    x\bigl(x(x(y(*\; *)\; *)\; *)\; *\bigr);
    \]
    
    \item the self-composition of \eqref{InPreAs3} at the monomial
    \[
    x\bigl(x(x(x(*\; *)\; *)\; *)\; *\bigr).
    \]
\end{itemize}
A direct reduction by \eqref{InPreAs1}--\eqref{InPreAs3} shows that both
compositions are trivial.

Finally, in degree $6$, the only remaining composition is the self-composition
of \eqref{InPreAs3} at the monomial
\[
x\bigl(x(x(x(x(*\; *)\; *)\; *)\; *)\; *\bigr).
\]
Reducing this monomial by \eqref{InPreAs1}--\eqref{InPreAs3}, we again obtain
zero. Therefore all critical compositions are trivial, and the rewriting system
\eqref{InPreAs1}--\eqref{InPreAs3} is a Gr\"obner--Shirshov basis.
\end{proof}

To describe a basis of the free algebra
$\pre\text{-}\As^{\I}\langle x_1\rangle$, we consider planar binary trees whose
leaves are unlabeled and whose internal vertices are labeled by $\bullet$ or
$\circ$. We call such a tree \emph{admissible} if it contains no subtree of one
of the following forms:

\begin{picture}(500,80)
\put(147,68){$\bullet$}
\put(150,70){\line(-1,-1){15}}
\put(150,70){\line(1,-1){15}}
\put(132,52){$\circ$}
\put(135,55){\line(-1,-1){15}}
\put(135,55){\line(1,-1){15}}

\put(217,68){$\circ$}
\put(220,70){\line(-1,-1){15}}
\put(220,70){\line(1,-1){15}}
\put(202,52){$\circ$}
\put(205,55){\line(-1,-1){15}}
\put(205,55){\line(1,-1){15}}

\put(287,68){$\bullet$}
\put(290,70){\line(-1,-1){15}}
\put(290,70){\line(1,-1){15}}
\put(272,52){$\bullet$}
\put(275,55){\line(-1,-1){15}}
\put(275,55){\line(1,-1){15}}
\put(257,37){$\bullet$}
\put(260,40){\line(-1,-1){15}}
\put(260,40){\line(1,-1){15}}
\end{picture}

\vspace*{-\baselineskip}
\vspace*{-\baselineskip}

Let $\mathcal{A}$ denote the set of monomials corresponding to admissible
trees.

\begin{corollary}
The set $\mathcal{A}$ is a linear basis of the algebra
$\pre\text{-}\As^{\I}\langle x_1\rangle$.
\end{corollary}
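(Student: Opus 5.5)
The plan is to apply the Composition–Diamond lemma for nonsymmetric operads (equivalently, for free algebras over the magma operad) to the Gröbner–Shirshov basis established in the preceding Theorem. That lemma says the following. If a set of relations $S$ is a Gröbner–Shirshov basis in the free magma operad with respect to a fixed monomial order, then the monomials that contain no leading monomial of $S$ as a submonomial (an occurrence inside a tree) form a linear basis of the quotient operad. Since the free algebra $\pre\text{-}\As^{\I}\langle x_1\rangle$ on one generator has degree $n$ component identified with $\pre\text{-}\As^{\I}(n)$ (nonsymmetric, so no symmetric group quotient is involved), this gives a linear basis of the free algebra: all leaves are labelled by $x_1$.

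The steps, in order, are as follows.

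First, fix the dictionary. The operation $x$ (that is, $\prec$) corresponds to one vertex colour and $y$ (that is, $\succ$) to the other; comparing the pictures with the leading monomials, $\circ$ stands for $y$ and $\bullet$ for $x$. A monomial in the magma operad is a planar binary tree with internal vertices decorated by these colours.

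Second, read off the leading monomials of \eqref{InPreAs1}, \eqref{InPreAs2} and \eqref{InPreAs3}. These are $x\bigl(y(*\;*)\;*\bigr)$, $y\bigl(y(*\;*)\;*\bigr)$ and $x\bigl(x(x(*\;*)\;*)\;*\bigr)$. Under the dictionary they are exactly the three forbidden patterns drawn above: a $\bullet$ with a left child $\circ$, a $\circ$ with a left child $\circ$, and a left comb of three $\bullet$'s. Containing such a pattern as a subtree means exactly that the monomial contains the corresponding leading monomial as a submonomial, because in the magma operad divisibility is occurrence of a rooted pattern, with arbitrary subtrees plugged into the leaves.

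Third, conclude by the Composition–Diamond lemma that the $S$-irreducible monomials, namely the monomials of $\mathcal A$, form a basis.

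The main point to be careful about is the consistency of the monomial order. One must confirm that the order used in the Theorem makes these three monomials the leading terms: each one should exceed every other monomial in its relation. This is the case for an order favouring left-heavy trees with $y>x$ at the root, for example one comparing by left-comb depth first and then by colours. If that order is fixed, the corollary is immediate; otherwise I would exhibit such a deg-lex type order on trees explicitly and check the few inequalities by hand.
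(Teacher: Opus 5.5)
Your proposal is correct and follows the paper's route: the paper states the corollary as an immediate consequence of the preceding Gr\"obner--Shirshov basis theorem via the Composition--Diamond lemma, with $\bullet = x = {\prec}$, $\circ = y = {\succ}$, and the three forbidden pictures being exactly the leading monomials of \eqref{InPreAs1}--\eqref{InPreAs3}. Your caveat about the monomial order is fair, since the paper never names one; it can be met by a path-lexicographic order (compare the root-to-leaf label words of the leaves from left to right, each word deg-lex with $y>x$), which is admissible and makes those three monomials leading.
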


\begin{example}
In degree $3$, that is, for planar binary trees with $3$ leaves, the admissible
monomials are
\[
\mathcal{A}_3=\Bigl\{
\bullet(\bullet(*\;*),\;*),\ 
\circ(\bullet(*\;*),\;*),\ 
\bullet(*,\;\bullet(*\;*)),\ 
\bullet(*,\;\circ(*\;*)),\ 
\circ(*,\;\bullet(*\;*)),\ 
\circ(*,\;\circ(*\;*))
\Bigr\}.
\]

In degree $4$, that is, for planar binary trees with $4$ leaves, the admissible
monomials are
\begingroup
\small
\setlength{\arraycolsep}{4pt}
\renewcommand{\arraystretch}{1.35}
\[
\mathcal{A}_4=\left\{
\begin{array}{lll}
\bullet(\bullet(*,\;\bullet(*\;*)),\;*),
&
\bullet(\bullet(*,\;\circ(*\;*)),\;*),
&
\circ(\bullet(\bullet(*\;*),\;*),\;*) ,
\\
\circ(\bullet(*,\;\bullet(*\;*)),\;*),
&
\circ(\bullet(*,\;\circ(*\;*)),\;*),
&
\bullet(\bullet(*\;*),\;\bullet(*\;*)) ,
\\
\bullet(\bullet(*\;*),\;\circ(*\;*)),
&
\circ(\bullet(*\;*),\;\bullet(*\;*)),
&
\circ(\bullet(*\;*),\;\circ(*\;*)) ,
\\
\bullet(*,\;\bullet(\bullet(*\;*),\;*)),
&
\bullet(*,\;\circ(\bullet(*\;*),\;*)),
&
\circ(*,\;\bullet(\bullet(*\;*),\;*)) ,
\\
\bullet(*,\;\bullet(*,\;\bullet(*\;*))),
&
\bullet(*,\;\bullet(*,\;\circ(*\;*))),
&
\bullet(*,\;\circ(*,\;\bullet(*\;*))) ,
\\
\bullet(*,\;\circ(*,\;\circ(*\;*))),
&
\circ(*,\;\bullet(*,\;\bullet(*\;*))),
&
\circ(*,\;\bullet(*,\;\circ(*\;*))) ,
\\
\circ(*,\;\circ(*,\;\bullet(*\;*))),
&
\circ(*,\;\circ(*,\;\circ(*\;*))),
&
\circ(*,\;\circ(\bullet(*\;*),\;*))
\end{array}
\right\}.
\]
\endgroup
\noindent
In degree $5$ (i.e., for planar binary trees with $5$ leaves), the admissible trees form the set

\begingroup
\scriptsize
\setlength{\arraycolsep}{2pt}
\renewcommand{\arraystretch}{1.25}
\[
\mathcal{A}_5=\left\{
\begin{array}{@{}llll@{}}
\bullet(*,\bullet(*,\bullet(*,\bullet(*\,*)))) ,
&
\bullet(*,\bullet(*,\bullet(*,\circ(*\,*)))) ,
&
\bullet(*,\bullet(*,\bullet(\bullet(*\,*),*))) ,
&
\bullet(*,\bullet(*,\circ(*,\bullet(*\,*)))) ,
\\
\bullet(*,\bullet(*,\circ(*,\circ(*\,*)))) ,
&
\bullet(*,\bullet(*,\circ(\bullet(*\,*),*))) ,
&
\bullet(*,\bullet(\bullet(*\,*),\bullet(*\,*))) ,
&
\bullet(*,\bullet(\bullet(*\,*),\circ(*\,*))) ,
\\
\bullet(*,\bullet(\bullet(*,\bullet(*\,*)),*)) ,
&
\bullet(*,\bullet(\bullet(*,\circ(*\,*)),*)) ,
&
\bullet(*,\circ(*,\bullet(*,\bullet(*\,*)))) ,
&
\bullet(*,\circ(*,\bullet(*,\circ(*\,*)))) ,
\\
\bullet(*,\circ(*,\bullet(\bullet(*\,*),*))) ,
&
\bullet(*,\circ(*,\circ(*,\bullet(*\,*)))) ,
&
\bullet(*,\circ(*,\circ(*,\circ(*\,*)))) ,
&
\bullet(*,\circ(*,\circ(\bullet(*\,*),*))) ,
\\
\bullet(*,\circ(\bullet(*\,*),\bullet(*\,*))) ,
&
\bullet(*,\circ(\bullet(*\,*),\circ(*\,*))) ,
&
\bullet(*,\circ(\bullet(*,\bullet(*\,*)),*)) ,
&
\bullet(*,\circ(\bullet(*,\circ(*\,*)),*)) ,
\\
\bullet(*,\circ(\bullet(\bullet(*\,*),*),*)) ,
&
\bullet(\bullet(*\,*),\bullet(*,\bullet(*\,*))) ,
&
\bullet(\bullet(*\,*),\bullet(*,\circ(*\,*))) ,
&
\bullet(\bullet(*\,*),\bullet(\bullet(*\,*),*)) ,
\\
\bullet(\bullet(*\,*),\circ(*,\bullet(*\,*))) ,
&
\bullet(\bullet(*\,*),\circ(*,\circ(*\,*))) ,
&
\bullet(\bullet(*\,*),\circ(\bullet(*\,*),*)) ,
&
\bullet(\bullet(*,\bullet(*\,*)),\bullet(*\,*)) ,
\\
\bullet(\bullet(*,\bullet(*\,*)),\circ(*\,*)) ,
&
\bullet(\bullet(*,\bullet(*,\bullet(*\,*))),*) ,
&
\bullet(\bullet(*,\bullet(*,\circ(*\,*))),*) ,
&
\bullet(\bullet(*,\bullet(\bullet(*\,*),*)),*) ,
\\
\bullet(\bullet(*,\circ(*\,*)),\bullet(*\,*)) ,
&
\bullet(\bullet(*,\circ(*\,*)),\circ(*\,*)) ,
&
\bullet(\bullet(*,\circ(*,\bullet(*\,*))),*) ,
&
\bullet(\bullet(*,\circ(*,\circ(*\,*))),*) ,
\\
\bullet(\bullet(*,\circ(\bullet(*\,*),*)),*) ,
&
\circ(*,\bullet(*,\bullet(*,\bullet(*\,*)))) ,
&
\circ(*,\bullet(*,\bullet(*,\circ(*\,*)))) ,
&
\circ(*,\bullet(*,\bullet(\bullet(*\,*),*))) ,
\\
\circ(*,\bullet(*,\circ(*,\bullet(*\,*)))) ,
&
\circ(*,\bullet(*,\circ(*,\circ(*\,*)))) ,
&
\circ(*,\bullet(*,\circ(\bullet(*\,*),*))) ,
&
\circ(*,\bullet(\bullet(*\,*),\bullet(*\,*))) ,
\\
\circ(*,\bullet(\bullet(*\,*),\circ(*\,*))) ,
&
\circ(*,\bullet(\bullet(*,\bullet(*\,*)),*)) ,
&
\circ(*,\bullet(\bullet(*,\circ(*\,*)),*)) ,
&
\circ(*,\circ(*,\bullet(*,\bullet(*\,*)))) ,
\\
\circ(*,\circ(*,\bullet(*,\circ(*\,*)))) ,
&
\circ(*,\circ(*,\bullet(\bullet(*\,*),*))) ,
&
\circ(*,\circ(*,\circ(*,\bullet(*\,*)))) ,
&
\circ(*,\circ(*,\circ(*,\circ(*\,*)))) ,
\\
\circ(*,\circ(*,\circ(\bullet(*\,*),*))) ,
&
\circ(*,\circ(\bullet(*\,*),\bullet(*\,*))) ,
&
\circ(*,\circ(\bullet(*\,*),\circ(*\,*))) ,
&
\circ(*,\circ(\bullet(*,\bullet(*\,*)),*)) ,
\\
\circ(*,\circ(\bullet(*,\circ(*\,*)),*)) ,
&
\circ(*,\circ(\bullet(\bullet(*\,*),*),*)) ,
&
\circ(\bullet(*\,*),\bullet(*,\bullet(*\,*))) ,
&
\circ(\bullet(*\,*),\bullet(*,\circ(*\,*))) ,
\\
\circ(\bullet(*\,*),\bullet(\bullet(*\,*),*)) ,
&
\circ(\bullet(*\,*),\circ(*,\bullet(*\,*))) ,
&
\circ(\bullet(*\,*),\circ(*,\circ(*\,*))) ,
&
\circ(\bullet(*\,*),\circ(\bullet(*\,*),*)) ,
\\
\circ(\bullet(*,\bullet(*\,*)),\bullet(*\,*)) ,
&
\circ(\bullet(*,\bullet(*\,*)),\circ(*\,*)) ,
&
\circ(\bullet(*,\bullet(*,\bullet(*\,*))),*) ,
&
\circ(\bullet(*,\bullet(*,\circ(*\,*))),*) ,
\\
\circ(\bullet(*,\bullet(\bullet(*\,*),*)),*) ,
&
\circ(\bullet(*,\circ(*\,*)),\bullet(*\,*)) ,
&
\circ(\bullet(*,\circ(*\,*)),\circ(*\,*)) ,
&
\circ(\bullet(*,\circ(*,\bullet(*\,*))),*) ,
\\
\circ(\bullet(*,\circ(*,\circ(*\,*))),*) ,
&
\circ(\bullet(*,\circ(\bullet(*\,*),*)),*) ,
&
\circ(\bullet(\bullet(*\,*),*),\bullet(*\,*)) ,
&
\circ(\bullet(\bullet(*\,*),*),\circ(*\,*)) ,
\\
\circ(\bullet(\bullet(*\,*),\bullet(*\,*)),*) ,
&
\circ(\bullet(\bullet(*\,*),\circ(*\,*)),*) ,
&
\circ(\bullet(\bullet(*,\bullet(*\,*)),*),*) ,
&
\circ(\bullet(\bullet(*,\circ(*\,*)),*),*)
\end{array}
\right\}.
\]
\endgroup
\end{example}

Proceeding in the same way, we obtain
\[
|\mathcal{A}_6|=322,\qquad
|\mathcal{A}_7|=1347,\qquad
|\mathcal{A}_8|=5798,\qquad
|\mathcal{A}_9|=25512,\qquad
|\mathcal{A}_{10}|=114236.
\]

Consequently, the dimensions of the non-symmetric operad
$\pre\text{-}\As^{\I}$ are given by
\[
\begin{array}{c|cccccccccc}
n & 1 & 2 & 3 & 4 & 5 & 6 & 7 & 8 & 9 & 10 \\ \hline
\dim(\pre\text{-}\As^{\I}(n))
& 1 & 2 & 6 & 21 & 80 & 322 & 1347 & 5798 & 25512 & 114236
\end{array}
\]

These values coincide with the entries of the OEIS sequence A106228 for
$n\leq 10$.
Recall that A106228 counts permutations of length $n$ avoiding the partially
ordered pattern
\[
\{1>3,\;1>4,\;4>2\}
\]
of length $4$. Equivalently, it counts permutations of length $n$ which contain
no subsequence of length $4$ such that the first entry is the largest one and
the fourth entry is greater than the second entry. This sequence admits the
following explicit formula:
\[
a(n)=\frac{1}{n+1}
\sum_{k=0}^{n}
\binom{n+1}{k}
\binom{n+k+1}{n-k}.
\]

We now turn to the free initial pre-commutative algebra
$\pre\text{-}\Com^{\I}\langle X\rangle$. At present, an explicit description of
a linear basis of $\pre\text{-}\Com^{\I}\langle X\rangle$ remains an open
problem. Nevertheless, using the computer algebra package \cite{Albert}, we
computed the dimensions in low degrees.

For the operad $\pre\text{-}\Com^{\I}$, we obtain
\[
\begin{array}{c|cccccccccc}
n & 1 & 2 & 3 & 4 & 5 & 6 & 7 & 8 & 9 & 10 \\ \hline
\dim(\pre\text{-}\Com^{\I}(n))
& 1 & 2 & 7 & 32 & 181 & 1232 & 9787 & 88832 & 907081 & 10291712
\end{array}
\]
These values coincide with the entries of the OEIS sequence A006154 for
$n\leq 10$.

Recall that A006154 counts labeled ordered partitions of an $n$-element set
into blocks of odd size. Its exponential generating function is
\[
f(x)=\frac{1}{1-\sinh(x)}.
\]
An explicit formula for this sequence is
\[
a(n)=
\sum_{k=1}^{n}
\sum_{i=0}^{k}
(-1)^i
\frac{(k-2i)^n}{2^k}
\binom{k}{i}.
\]

\begin{remark}
There is no need to construct a linear basis of the free initial pre-Lie algebra. As shown in \cite{DMS2026},
the algebra $\pre\text{-}\Lie^{\I}\langle X\rangle$ coincides with the free
Lie-admissible algebra.
\end{remark}

\section{A bijection of the set \texorpdfstring{$\mathcal{A}$}{A} with the paths of type \texorpdfstring{$F$}{F}}

In this section, we prove that
\[
\dim\bigl(\pre\text{-}\As^{\I}(n)\bigr)=A106228(n-1),\qquad n\ge 1.
\]
The sequence A106228 enumerates lattice paths of type $F$, as shown in \cite{HuhKimSeoShin2024}. We recall the definition of these paths below.

Throughout, we use the natural convention that the arity of a monomial is the
number of leaves of the corresponding planar binary tree.
We write $\mathcal A_n$ for the set of admissible planar binary trees with $n$
leaves and internal labels $\bullet,\circ$, that is, trees avoiding the forbidden
subtrees corresponding to the leading monomials of \eqref{InPreAs1}--\eqref{InPreAs3}.

\begin{definition}
Following \cite{HuhKimSeoShin2024}, let $F_{m}$ be the set of all lattice paths
\[
Q=((0,0),(x_1,y_1),\dots,(x_m,y_m))
\]
with steps in
\[
F=\{(a,b)\mid a\ge 1,\ b\le 1\}\cup\{(0,1)\},
\]
such that $x_i\le y_i$ for all $i$.
We call such paths \emph{$F$-paths}.
Their height is
\[
\operatorname{ht}(Q)=y_m-x_m.
\]
\end{definition}

To construct a bijection
\[
\mathcal A_n\longleftrightarrow F_{n-1},
\]
we first introduce several auxiliary definitions and lemmas.

Let $T\in\mathcal A_n$.
The \emph{rightmost leaf} of $T$ is the unique leaf obtained by following always
the right edge from the root.

\begin{definition}
For admissible trees $T$ and $S$, define their \emph{direct sum} $T\oplus S$ to
be the admissible tree obtained from $T$ by replacing the rightmost leaf of $T$
with the tree $\circ(*,S)$.
\end{definition}

\begin{lemma}\label{lem:sum-assoc}
Let $T$ and $S$ be admissible trees. Then the direct sum $T\oplus S$ is again admissible. Moreover, the operation $\oplus$ is associative.
\end{lemma}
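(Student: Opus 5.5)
The plan is to work directly with the combinatorial description of the forbidden subtrees and to observe that every one of them is detected only through \emph{left} edges. By the dictionary used in the rewriting rules, $\bullet$ corresponds to $x$ (that is, $\prec$) and $\circ$ corresponds to $y$ (that is, $\succ$). The three forbidden patterns, coming from the leading monomials of \eqref{InPreAs1}--\eqref{InPreAs3}, are therefore:
\begin{enumerate}
\item an internal vertex whose left child is a $\circ$-vertex, whatever the label of the parent ($\bullet$ or $\circ$);
\item a chain of three $\bullet$-vertices, each the left child of the previous one.
\end{enumerate}
So I would first record the reformulation: a tree is admissible if and only if no $\circ$-vertex is a left child, and there is no left chain $\bullet\!-\!\bullet\!-\!\bullet$. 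Both conditions concern only a vertex together with its left child and left grandchild.

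For admissibility of $T\oplus S$ I would check each left-edge configuration in the new tree. The new tree consists of a copy of $T$ in which one leaf is replaced, a new vertex $v$ labelled $\circ$, and a copy of $S$ attached as the right child of $v$.
\begin{enumerate}
\item Left edges inside the copy of $S$ are unchanged, so they create no forbidden pattern.
\item Left edges inside $T$ are unchanged. The only modified position is the rightmost leaf, which is never a left child: it is either a right child or the root. The root case is $T=*$, where $T\oplus S=\circ(*,S)$.
\item Hence $v$ is not a left child, so the first condition is not violated at $v$.
\item No left chain passes through $v$ from above.
\item The left child of $v$ is a leaf, so no pattern is rooted at $v$.
\end{enumerate}
This exhausts all cases, so $T\oplus S$ is admissible.

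For associativity I would use the rightmost-path description. The rightmost leaf of $T\oplus S$ is the rightmost leaf of the inserted copy of $S$, since from the root of $T\oplus S$ one follows right edges to $v$, then to the root of $S$, and onward. Hence $(T\oplus S)\oplus U$ is obtained from $T$ by replacing its rightmost leaf with $\circ(*,S')$, where $S'$ is $S$ with its rightmost leaf replaced by $\circ(*,U)$, that is, $S'=S\oplus U$. This is exactly $T\oplus(S\oplus U)$.

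The only step needing care is the bookkeeping of the left/right child positions. In particular one must treat the degenerate case where $T$ or $S$ is a single leaf, which is handled by the same argument since the rightmost leaf is then the root. I expect no genuine obstacle beyond making the reformulation of admissibility in terms of left edges explicit.
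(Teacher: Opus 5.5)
Your proof is correct and follows the same route as the paper. The only new vertex is a $\circ$-vertex on the right spine whose left child is a leaf, so none of the forbidden patterns can appear; associativity follows because the rightmost leaf of $T\oplus S$ lies in the copy of $S$. Your reformulation of admissibility in terms of left edges between internal vertices makes explicit what the paper's one-line justification leaves implicit.
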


\begin{proof}
Replacing the rightmost leaf of $T$ by $\circ(*,S)$ creates only one new internal
vertex, namely a vertex labeled $\circ$ whose left child is a leaf. Since the new
vertex lies on the rightmost branch, no forbidden subtree of the forms
\[
\bullet(\circ(\cdot,\cdot),\cdot),\qquad
\circ(\circ(\cdot,\cdot),\cdot),\qquad
\bullet(\bullet(\bullet(\cdot,\cdot),\cdot),\cdot)
\]
can be created. Hence $T\oplus S$ is admissible.

Associativity is immediate, because the rightmost leaf of $T\oplus S$ lies in the
copy of $S$. Therefore replacing it by $\circ(*,U)$ gives precisely
\[
(T\oplus S)\oplus U=T\oplus (S\oplus U).
\]
\end{proof}

\begin{definition}
An admissible tree is called \emph{indecomposable} if it cannot be represented in the form
$U\oplus V$.
For $T\in\mathcal A_n$, we denote by $\bl(T)$ the \emph{block number} of $T$, that is, the number of indecomposable factors in the unique decomposition of $T$ as an iterated direct sum.
\end{definition}

\begin{lemma}\label{lem:block-criterion}
A tree $T$ is indecomposable if and only if no vertex on the right spine of $T$
is labeled $\circ$ and has a left leaf.
Moreover, every admissible tree admits a unique decomposition
\[
T=T_1\oplus T_2\oplus\cdots\oplus T_r
\]
into indecomposable admissible trees, and then $\bl(T)=r$. 

For any admissible trees $U,R$, one has 
\[
\bl(U\oplus R)=\bl(U)+\bl(R).
\]

\end{lemma}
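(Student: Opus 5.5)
The plan is to analyse the right spine of an admissible tree. Write the right spine of $T$ as the chain of internal vertices $v_1$ (the root), $v_2,\dots,v_k$, where each $v_{j+1}$ is the right child of $v_j$ and the right child of $v_k$ is the rightmost leaf. Call a spine vertex $v_j$ with $j\ge 2$ a \emph{cut vertex} if it is labeled $\circ$ and its left child is a leaf.

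\textbf{Key observation.} If $v_j$ ($j\ge2$) is a cut vertex, then $T=U\oplus V$. Here $V$ is the right subtree of $v_j$, and $U$ is obtained from $T$ by replacing the subtree rooted at $v_j$ by a single leaf. The leaf so created is exactly the rightmost leaf of $U$, because $v_j$ lies on the spine. Conversely, by the definition of $\oplus$, in $U\oplus V$ the root of the inserted tree $\circ(*,V)$ is a cut vertex. Moreover, $U$ is admissible: deleting a subtree hanging on the right spine, by turning it into a leaf, cannot create any of the three forbidden patterns, since these involve only left children. This gives the first claim, with one caveat. The root itself cannot serve as a cut (the summand $U$ must be a genuine tree, not a bare leaf), so ``right spine'' in the criterion should be read as excluding the root. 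I would state that convention explicitly, since $\circ(*,*)$ must count as indecomposable.

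\textbf{Existence and uniqueness.} Let $v_{j_1},\dots,v_{j_{r-1}}$ be all the cut vertices, with $j_1<\dots<j_{r-1}$. Cutting $T$ at all of them simultaneously produces trees $T_1,\dots,T_r$. Each $T_i$ is the segment of $T$ between consecutive cut vertices, with its bottom replaced by a leaf, and $T_r$ is the right subtree of the last cut vertex. By associativity (Lemma~\ref{lem:sum-assoc}), $T=T_1\oplus\cdots\oplus T_r$.

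Each $T_i$ is indecomposable. Its right spine (root excluded) consists of spine vertices of $T$ strictly between two consecutive cuts, with the same labels and the same left children. A spine vertex of $T_i$ whose right child was a cut vertex now has a leaf as right child, but that does not affect the cut condition, which concerns only the left child. Hence $T_i$ has no cut vertex.

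For uniqueness, suppose $T=S_1\oplus\cdots\oplus S_s$ with each $S_i$ indecomposable. Tracking the roots of the inserted trees $\circ(*,S_{i+1})$ gives $s-1$ cut vertices of $T$. Any further cut vertex of $T$ would lie on the right spine of some $S_i$ (root excluded), and would still be labeled $\circ$ with a left leaf there, contradicting the indecomposability of $S_i$. So the cut vertices of $T$ are exactly these $s-1$, and therefore $s=r$ and $S_i=T_i$.

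\textbf{Additivity.} The cut vertices of $U\oplus R$ are the cut vertices of $U$, the new root of $\circ(*,R)$, and the cut vertices of $R$ transported into the copy. Hence
\[
\bl(U\oplus R)=(\bl(U)-1)+1+(\bl(R)-1)+1=\bl(U)+\bl(R).
\]

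The main obstacle is bookkeeping rather than any deep difficulty. One must carefully justify that cut vertices are neither created nor destroyed when passing between $T$ and its pieces, and must pin down the root convention so that the criterion is stated correctly.
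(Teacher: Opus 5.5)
Your cutting argument is the right one. With the root included, it is essentially the paper's proof: the paper cuts at every $\circ(*,\cdot)$ vertex on the right spine, and gets additivity by concatenating the two block decompositions and invoking uniqueness, which is equivalent to your count of cut vertices. The problem is your caveat. The statement is correct as written, with the root included, and adopting your reading breaks your uniqueness and additivity steps.

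In the paper the one-leaf tree $*$ is admissible ($\mathcal A_1=\{*\}$, $\Psi(\varnothing)=*$, $\bl(*)=1$), and its rightmost leaf is itself. Nothing excludes $U=*$ in $U\oplus V$, so $*\oplus V=\circ(*,V)$. Hence the root is a legitimate cut, and $\circ(*,*)=*\oplus *$ is decomposable with $\bl=2$. The rest of the paper forces this:
\begin{itemize}
\item Lemma~\ref{lem:block-rec} gives $\bl(\circ(*,*))=\bl(*)+1=2$.
\item Theorem~\ref{thm:tree-to-fpath} needs $\bl(\circ(*,*))-1=\operatorname{ht}((0,1))=1$.
\item Case (III) of $\Phi$ with $U=*$ uses $\Phi(*\oplus R)$, e.g.\ $\Phi(\bullet(\bullet(*,*),*))=(0,1)(2,1)$.
\end{itemize}

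Under your root-excluded reading the lemma is false, and your proof misses why. The root of a summand $S_i$ with $i\ge 2$, or of $R$ in $U\oplus R$, becomes a non-root spine vertex of the sum. If it is labeled $\circ$ with a left leaf, it is a cut vertex of the sum that was never counted as a cut vertex of the summand.

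For instance, take $T=\bullet(*,*)\oplus\circ(*,*)=\bullet(*,\circ(*,\circ(*,*)))$. Both summands are indecomposable in your sense. Yet $T$ has two non-root cut vertices, and your own cutting procedure returns $T=\bullet(*,*)\oplus *\oplus *$. So uniqueness fails, and your count gives $\bl(T)=3\neq 1+1$. Consequently two of your sentences are false under your convention: ``any further cut vertex of $T$ would lie on the right spine of some $S_i$ (root excluded)'' and ``the cut vertices of $R$ transported into the copy''.

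The repair is to drop the caveat: take cut vertices $v_j$ with $j\ge 1$, and allow $*$ as a block (and as the left summand). Then each piece $T_i$ has its entire right spine, root included, made of non-cut vertices of $T$, and every step of your argument goes through.
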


\begin{proof}
By definition of $\oplus$, each application of $\oplus$ inserts exactly one new
vertex of the form $\circ(*,\cdot)$ on the right spine. Conversely, every such
vertex on the right spine separates the tree uniquely into a left summand and a
right summand. Cutting successively at all such vertices yields the required
decomposition, and uniqueness is obvious.

To prove the second part, we write $U=U_1\oplus\cdots\oplus U_p$ and $R=R_1\oplus\cdots\oplus R_q$ as indecomposable blocks (unique by Lemma~\ref{lem:block-criterion}).  By the definition of $\oplus$ and associativity, 
\[
U\oplus R=U_1\oplus\cdots\oplus U_p\oplus R_1\oplus\cdots\oplus R_q,
\] 
so $\bl(U\oplus R)=p+q=\bl(U)+\bl(R)$.  Uniqueness of block decomposition ensures this is well-defined.

\end{proof}

\begin{lemma}\label{lem:five-types}
Every non-leaf admissible tree belongs to exactly one of the following five disjoint classes:
\begin{align*}
\textup{(I)}\;& \circ(*,R),\\
\textup{(II)}\;& \bullet(*,R),\\
\textup{(III)}\;& \bullet(\bullet(*,U),R),\\
\textup{(IV)}\;& \circ(\bullet(*,U),R),\\
\textup{(V)}\;& \circ(\bullet(\bullet(*,U_1),U_2),R),
\end{align*}
where $R,U,U_1,U_2$ are admissible trees.
\end{lemma}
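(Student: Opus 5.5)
The plan is a case analysis on the root label and on the shape of the left subtree of the root. The only input is the admissibility condition, i.e.\ avoiding the three forbidden subtrees $\bullet(\circ(\cdot,\cdot),\cdot)$, $\circ(\circ(\cdot,\cdot),\cdot)$ and $\bullet(\bullet(\bullet(\cdot,\cdot),\cdot),\cdot)$. We also use the fact that every subtree of an admissible tree is admissible, since any forbidden pattern inside a subtree is a forbidden pattern in the whole tree. This guarantees that the pieces $R,U,U_1,U_2$ are admissible.

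Let $T$ be a non-leaf admissible tree, $T=\lambda(L,R)$ with $\lambda\in\{\bullet,\circ\}$.

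If $L=*$, then $T$ is of type (I) or (II) according to $\lambda$.

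If $L$ is not a leaf, then $L$ cannot have root $\circ$. For $\lambda=\bullet$ this would be the pattern $\bullet(\circ(\cdot,\cdot),\cdot)$, and for $\lambda=\circ$ it would be $\circ(\circ(\cdot,\cdot),\cdot)$. So $L=\bullet(L',U)$.

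Subcase $\lambda=\bullet$. If $L'$ is not a leaf, then $T=\bullet(\bullet(L',U),R)$ contains $\bullet(\bullet(\bullet(\cdot,\cdot),\cdot),\cdot)$ when $L'$ has root $\bullet$. When $L'$ has root $\circ$, $L$ contains $\bullet(\circ(\cdot,\cdot),\cdot)$. Either way this is forbidden, so $L'=*$ and $T$ is of type (III).

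Subcase $\lambda=\circ$. If $L'=*$, then $T$ is of type (IV). Otherwise $L'$ cannot have root $\circ$, since $L$ would then contain $\bullet(\circ(\cdot,\cdot),\cdot)$. So $L'=\bullet(L'',U_2)$. Now $L=\bullet(\bullet(L'',U_2),U)$, and $L''$ must be a leaf: if $L''$ had root $\bullet$ we get the triple-$\bullet$ pattern, and if it had root $\circ$ we get $\bullet(\circ,\cdot)$ inside $L'$. Hence $T=\circ(\bullet(\bullet(*,U_1),U_2),R)$ with the names matched to the statement, which is type (V).

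Disjointness is immediate from the same data: the root label, whether the left child is a leaf, and whether the left child's left child is a leaf. These separate the five classes pairwise. For example, (IV) and (V) are distinguished by whether the left child of the left $\bullet$ is a leaf, and the decomposition within each class is unique because a planar tree determines its subtrees.

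Conversely, one could note that each of the five forms is admissible whenever its pieces are, but the statement only asks for the classification. The only point needing care is the subcase $\lambda=\circ$ with a non-leaf $L'$, where two levels of forbidden patterns must be checked; everything else is mechanical.
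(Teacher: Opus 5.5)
Your proof is correct and is essentially the paper's argument: you split on the root label and on whether the left child and the left-left child are leaves. The patterns $\bullet(\circ(\cdot,\cdot),\cdot)$ and $\circ(\circ(\cdot,\cdot),\cdot)$ force $\bullet$ labels down the left spine, and the triple-$\bullet$ pattern forces the leaf at the bottom. You are slightly more careful than the paper, which in case (III) never explicitly rules out a $\circ$-rooted left-left child and never notes that the pieces $R,U,U_1,U_2$ are admissible; the only blemish is that in case (V) your $U_2$ and $U$ play the roles of the statement's $U_1$ and $U_2$, which your renaming already handles.
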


\begin{proof}
Let $T=(\alpha,L,R)$ be a non-leaf admissible tree, where $\alpha\in\{\bullet,\circ\}$.

If $L$ is a leaf, then we are in \textup{(I)} or \textup{(II)} depending on
$\alpha=\circ$ or $\alpha=\bullet$.

Assume now that $L$ is not a leaf. Since neither
$\bullet(\circ(\cdot,\cdot),\cdot)$ nor $\circ(\circ(\cdot,\cdot),\cdot)$ is admissible,
the root of $L$ must be labeled $\bullet$. Thus $L=\bullet(L_1,L_2)$.

If $\alpha=\bullet$, then the forbidden subtree
$\bullet(\bullet(\bullet(\cdot,\cdot),\cdot),\cdot)$ shows that $L_1$ must be a leaf.
Hence we are in \textup{(III)}.

If $\alpha=\circ$ and $L_1$ is a leaf, then we are in \textup{(IV)}.

Finally, if $\alpha=\circ$ and $L_1$ is not a leaf, then admissibility of $L$
forces $L_1=\bullet(*,U_1)$ for some admissible $U_1$, and $L_2=U_2$ is arbitrary.
Hence we are in \textup{(V)}.

The five classes are pairwise disjoint by construction.
\end{proof}

\begin{lemma}\label{lem:block-rec}
For admissible trees, the block number satisfies:
\begin{align*}
\bl(*)&=1,\\
\bl(\circ(*,R))&=\bl(R)+1,\\
\bl(\bullet(*,R))&=\bl(R),\\
\bl(\bullet(\bullet(*,U),R))&=\bl(R),\\
\bl(\circ(\bullet(*,U),R))&=\bl(R),\\
\bl(\circ(\bullet(\bullet(*,U_1),U_2),R))&=\bl(R).
\end{align*}
\end{lemma}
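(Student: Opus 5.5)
The plan is to read off the block number from the direct-sum structure along the right spine, using Lemma~\ref{lem:block-criterion}. That lemma says $T$ decomposes as $T_1\oplus\cdots\oplus T_r$ with $r=\bl(T)$. The cut points of this decomposition are exactly the vertices on the right spine of $T$ that are labeled $\circ$ and have a leaf as left child. Consequently $\bl(T)$ equals one plus the number of such vertices on the right spine. A single leaf has an empty spine, so $\bl(*)=1$, which is the first formula.

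For the remaining five formulas I will use the five classes of Lemma~\ref{lem:five-types}. In every class the root is on the right spine, and the right spine of $T$ consists of the root followed by the right spine of $R$. So the count of $\circ$-vertices with a left leaf on the spine of $T$ is the count for $R$, plus $1$ exactly when the root itself is labeled $\circ$ with a leaf as left child.

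I then check each class directly.
\begin{itemize}
\item Class (I), $\circ(*,R)$, is the only class whose root has label $\circ$ and a leaf left child. This gives $\bl(\circ(*,R))=\bl(R)+1$. Equivalently, $\circ(*,R)=*\oplus R$, so the additivity $\bl(U\oplus R)=\bl(U)+\bl(R)$ together with $\bl(*)=1$ yields the same conclusion.
\item In classes (II) and (III) the root is labeled $\bullet$, so it contributes nothing.
\item In classes (IV) and (V) the root is labeled $\circ$, but its left child is a $\bullet$-vertex and not a leaf, so again it contributes nothing.
\end{itemize}
In these last four cases we therefore get $\bl(T)=\bl(R)$.

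The only delicate point is justifying that the spine characterization really computes $\bl$. This requires the decomposition cuts to be precisely those vertices, so that cutting at the root of a class (II)--(V) tree is impossible. Any decomposition $U\oplus V$ places the glued vertex $\circ(*,V)$ at the position of the rightmost leaf of $U$, which lies strictly on the spine. The root of $T$ is such a vertex only if $U=*$, which forces $T=\circ(*,V)$. This settles the point by the uniqueness statement of Lemma~\ref{lem:block-criterion}.
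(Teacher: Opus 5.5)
Your proposal is correct and follows essentially the same route as the paper: $\bl(T)$ is one plus the number of $\circ$-vertices with a left leaf on the right spine. Only in class (I) does the root contribute such a separator, so in the other four classes $\bl$ is inherited from $R$. Your extra check that the root of a class (II)--(V) tree cannot be a cut point (since that would force $U=*$ and hence $T=\circ(*,V)$) makes explicit what the paper's one-line proof leaves implicit.
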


\begin{proof}
Only in the case $\circ(*,R)$ does the root itself contribute a new separator
$\circ(*,\cdot)$ on the right spine. In all other four cases, the root is not such
a separator, so the block number is inherited from the right subtree.
\end{proof}

\begin{definition}
Define recursively a map
\[
\Phi:\bigsqcup_{n\ge 1}\mathcal A_n\longrightarrow \bigsqcup_{m\ge 0}F_m
\]
as follows. For the one-leaf tree,
\[
\Phi(*)=\varnothing.
\]
For a non-leaf admissible tree, we use the five cases of
Lemma~\ref{lem:five-types}:

\begin{align*}
\Phi(\circ(*,R))
&=\Phi(R)\,(0,1),\\
\Phi(\bullet(*,R))
&=\Phi(R)\,(1,1),\\
\Phi(\bullet(\bullet(*,U),R))
&=\Phi(U\oplus R)\,(\bl(U)+1,\,1),\\
\Phi(\circ(\bullet(*,U),R))
&=\Phi(U\oplus R)\,(1,\,1-\bl(U)),\\
\Phi(\circ(\bullet(\bullet(*,U_1),U_2),R))
&=\Phi(U_1\oplus U_2\oplus R)\,(\bl(U_1)+1,\,1-\bl(U_2)).
\end{align*}
\end{definition}

\begin{theorem}\label{thm:tree-to-fpath}
For every $n\ge 1$, the restriction $\Phi_n=\Phi|_{\mathcal A_n}$ satisfies
\[
\Phi_n(\mathcal A_n)\subseteq F_{n-1}.
\]
Moreover,
\[
\operatorname{ht}(\Phi(T))=\bl(T)-1
\qquad\text{for all }T\in\mathcal A_n.
\]
\end{theorem}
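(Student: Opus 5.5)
The plan is to prove both claims at once by strong induction on the number of leaves $n$, following the recursive definition of $\Phi$ case by case as given by Lemma~\ref{lem:five-types}. The base case $n=1$ is immediate: $\Phi(*)=\varnothing\in F_0$ is the one-point path $((0,0))$. Its height is $0=\bl(*)-1$ by Lemma~\ref{lem:block-rec}.

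Before the induction we record three facts.

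\begin{enumerate}
\item $\Phi$ is well defined. The five classes of Lemma~\ref{lem:five-types} are disjoint and exhaust all non-leaf admissible trees. Every tree to which $\Phi$ is applied on the right-hand side is admissible: for $U\oplus R$ and $U_1\oplus U_2\oplus R$ this follows from Lemma~\ref{lem:sum-assoc}.
\item The number of leaves is additive: $|U\oplus R|=|U|+|R|$. This holds because $\oplus$ replaces one leaf of $U$ by $\circ(*,R)$, which has $1+|R|$ leaves.
\item $\bl(S)\ge 1$ for every admissible $S$. This follows from Lemma~\ref{lem:block-rec} by induction.
\end{enumerate}

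Fact 2 gives the counts needed for the induction. In cases (I) and (II) the argument $R$ has $n-1$ leaves. In case (III) we have $n=1+|U|+|R|$ and $|U\oplus R|=n-1$. The same computation applies to case (IV). In case (V) we have $|U_1\oplus U_2\oplus R|=|U_1|+|U_2|+|R|=n-1$. So in every case $\Phi(T)$ is an inductively known path of length $n-2$ in $F_{n-2}$, followed by one additional step, giving a path of length $n-1$. The recursion is also well-founded.

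Next I check that each appended step lies in $F$. The step $(0,1)$ is allowed explicitly. The steps $(1,1)$ and $(\bl(U)+1,1)$ have first coordinate $\ge 1$ and second coordinate $\le 1$. The steps $(1,1-\bl(U))$ and $(\bl(U_1)+1,1-\bl(U_2))$ have first coordinate $\ge1$, and second coordinate $\le 0$ because $\bl\ge1$ by fact 3.

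For the height, a step $(a,b)$ changes $y-x$ by $b-a$. I combine the induction hypothesis with Lemma~\ref{lem:block-criterion} ($\bl(U\oplus R)=\bl(U)+\bl(R)$) and Lemma~\ref{lem:block-rec}:
\begin{itemize}
\item Case (I): $\operatorname{ht}=\bl(R)-1+1=\bl(T)-1$.
\item Case (II): $\operatorname{ht}=\bl(R)-1=\bl(T)-1$.
\item Case (III): $\operatorname{ht}=\bl(U)+\bl(R)-1+(1-\bl(U)-1)=\bl(R)-1$.
\item Case (IV): $\operatorname{ht}=\bl(U)+\bl(R)-1+(1-\bl(U)-1)=\bl(R)-1$.
\item Case (V): $\operatorname{ht}=\bl(U_1)+\bl(U_2)+\bl(R)-1+(1-\bl(U_2)-\bl(U_1)-1)=\bl(R)-1$.
\end{itemize}
By Lemma~\ref{lem:block-rec}, in cases (III)--(V) the value $\bl(R)-1$ equals $\bl(T)-1$. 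This proves the height formula.

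Finally, the constraint $x_i\le y_i$ for $\Phi(T)$ holds as follows. All points except the last are points of the inductively valid prefix path, so they satisfy it. The last point has $y-x=\operatorname{ht}(\Phi(T))=\bl(T)-1\ge0$ by fact 3. Hence $\Phi(T)\in F_{n-1}$.

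I expect the only delicate points to be the bookkeeping of leaves under $\oplus$ and the positivity $\bl\ge 1$. Positivity is exactly what guarantees both that the steps with a negative vertical component stay in $F$ and that the endpoint stays weakly above the diagonal. Both points reduce to Lemmas~\ref{lem:sum-assoc}, \ref{lem:block-criterion} and \ref{lem:block-rec}.
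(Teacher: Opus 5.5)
Your proof is correct and follows the paper's argument: induction on the number of leaves over the five root types of Lemma~\ref{lem:five-types}, with the height computed from $\bl(U\oplus R)=\bl(U)+\bl(R)$ and Lemma~\ref{lem:block-rec}, and the endpoint kept weakly above the diagonal because $\bl\ge 1$. Your extra bookkeeping (leaf counts under $\oplus$, each appended step lying in $F$, earlier vertices coming from the inductively valid prefix) fills in details the paper leaves implicit. One small correction: $\bl\ge 1$ is not needed for the appended steps to lie in $F$, since every $(a,b)$ with $a\ge 1$ and $b\le 1$ is allowed.
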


\begin{proof}
We proceed by induction on the number of leaves. The statement is obvious for $n=1$.
Assume that $T$ is non-leaf and the theorem is true for all smaller admissible
trees. We check separately the five cases.

\noindent\textbf{Case 1:} $T=\circ(*,R)$.

By induction,
\[
\operatorname{ht}(\Phi(R))=\bl(R)-1.
\]
Appending the step $(0,1)$ raises the height by $1$, hence
\[
\operatorname{ht}(\Phi(T))
=\operatorname{ht}(\Phi(R))+1
=\bl(R)
=\bl(T)-1
\]
by Lemma~\ref{lem:block-rec}. Therefore $\Phi(T)\in F_{n-1}$.

\smallskip
\noindent\textbf{Case 2:} $T=\bullet(*,R)$.

Appending $(1,1)$ does not change the height, hence
\[
\operatorname{ht}(\Phi(T))
=\operatorname{ht}(\Phi(R))
=\bl(R)-1
=\bl(T)-1.
\]
Thus $\Phi(T)\in F_{n-1}$.

\smallskip
\noindent\textbf{Case 3:} $T=\bullet(\bullet(*,U),R)$.

Set $\widehat T=U\oplus R$. By induction,
\[
\operatorname{ht}(\Phi(\widehat T))=\bl(\widehat T)-1=\bl(U)+\bl(R)-1.
\]
After appending the step $(\bl(U)+1,1)$, the new height becomes
\[
\bl(U)+\bl(R)-1 + 1-(\bl(U)+1)=\bl(R)-1=\bl(T)-1.
\]
Since $\bl(R)\ge 1$, the final height is nonnegative, so $\Phi(T)\in F_{n-1}$.

\smallskip
\noindent\textbf{Case 4:} $T=\circ(\bullet(*,U),R)$.

Set $\widehat T=U\oplus R$. Again,
\[
\operatorname{ht}(\Phi(\widehat T))=\bl(U)+\bl(R)-1.
\]
Appending $(1,1-\bl(U))$ changes the height by $-\bl(U)$, hence
\[
\operatorname{ht}(\Phi(T))
=\bl(U)+\bl(R)-1-\bl(U)
=\bl(R)-1
=\bl(T)-1.
\]
Thus $\Phi(T)\in F_{n-1}$.

\smallskip
\noindent\textbf{Case 5:}
$T=\circ(\bullet(\bullet(*,U_1),U_2),R)$.

Set $\widehat T=U_1\oplus U_2\oplus R$. Then
\[
\operatorname{ht}(\Phi(\widehat T))
=\bl(U_1)+\bl(U_2)+\bl(R)-1.
\]
Appending the step $(\bl(U_1)+1,\,1-\bl(U_2))$ changes the height by
\[
(1-\bl(U_2))-(\bl(U_1)+1)= -\bl(U_1)-\bl(U_2),
\]
and therefore
\[
\operatorname{ht}(\Phi(T))
=\bl(R)-1
=\bl(T)-1.
\]
Again $\Phi(T)\in F_{n-1}$.
\end{proof}

To prove bijectivity, we construct the inverse explicitly.

\begin{definition}
Define recursively
\[
\Psi:\bigsqcup_{m\ge 0}F_m\longrightarrow \bigsqcup_{n\ge 1}\mathcal A_n.
\]
For the empty path, $\Psi(\varnothing)=*$. Let
\[
Q=\widehat Q\,(a,b)\in F_m,
\qquad
\widehat T=\Psi(\widehat Q).
\]
Write the unique direct-sum decomposition of $\widehat T$ as
\[
\widehat T=T_1\oplus T_2\oplus\cdots\oplus T_r,
\qquad r=\bl(\widehat T)=\operatorname{ht}(\widehat Q)+1.
\]

Now define $\Psi(Q)$ according to the last step $(a,b)$:

\begin{itemize}
\item if $(a,b)=(0,1)$, then
\[
\Psi(Q)=\circ(*,\widehat T);
\]

\item if $(a,b)=(1,1)$, then
\[
\Psi(Q)=\bullet(*,\widehat T);
\]

\item if $a\ge 2$ and $b=1$, put
\[
U=T_1\oplus\cdots\oplus T_{a-1},
\qquad
R=T_a\oplus\cdots\oplus T_r,
\]
and set
\[
\Psi(Q)=\bullet(\bullet(*,U),R);
\]

\item if $a=1$ and $b\le 0$, put
\[
U=T_1\oplus\cdots\oplus T_{1-b},
\qquad
R=T_{2-b}\oplus\cdots\oplus T_r,
\]
and set
\[
\Psi(Q)=\circ(\bullet(*,U),R);
\]

\item if $a\ge 2$ and $b\le 0$, put
\[
U_1=T_1\oplus\cdots\oplus T_{a-1},\;\;\;
U_2=T_a\oplus\cdots\oplus T_{a-b},
\]
\[
R=T_{a-b+1}\oplus\cdots\oplus T_r,
\]
and set
\[
\Psi(Q)=\circ(\bullet(\bullet(*,U_1),U_2),R).
\]
\end{itemize}
\end{definition}

The construction above is legitimate. Indeed, since $Q\in F_m$, the final height of $Q$ is nonnegative. Hence, for the last step $(a,b)$, we have

\[
\operatorname{ht}(\widehat Q)+b-a\ge 0.
\]
Using
\[
r=\bl(\widehat T)=\operatorname{ht}(\widehat Q)+1,
\]
we obtain
\[
r\ge a-b+1.
\]
This inequality guarantees that the direct-sum decomposition of $\widehat T$
contains sufficiently many blocks to perform all the splits used in the definition of $\Psi$.

For example, in the third case, where $a\ge 2$ and $b=1$, the inequality gives $r\ge a$. Therefore the decomposition
\[
\widehat T=T_1\oplus T_2\oplus\cdots\oplus T_r
\]
can be split as
\[
U=T_1\oplus\cdots\oplus T_{a-1},
\qquad
R=T_a\oplus\cdots\oplus T_r.
\]
Thus the tree
\[
\Psi(Q)=\bullet(\bullet(*,U),R)
\]
is obtained from a valid block decomposition. The other cases are treated in the same way. Moreover, the resulting tree is admissible by the five root forms described in Lemma~\ref{lem:five-types}.

\begin{theorem}\label{thm:PhiPsi}
For every $n\ge 1$, the maps $\Phi_n$ and $\Psi_{n-1}$ are mutually inverse
bijections:
\[
\Phi_n:\mathcal A_n\longleftrightarrow F_{n-1}:\Psi_{n-1}.
\]
\end{theorem}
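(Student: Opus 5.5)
We will prove that $\Psi\circ\Phi=\mathrm{id}$ on $\mathcal A_n$ and $\Phi\circ\Psi=\mathrm{id}$ on $F_{n-1}$. Both identities follow by simultaneous induction on $n$, using the recursive definitions. Before starting the induction we record the size bookkeeping. In each of the five cases of Lemma~\ref{lem:five-types}, the auxiliary tree ($R$, $U\oplus R$ or $U_1\oplus U_2\oplus R$) has exactly one leaf fewer than $T$. This holds because $\oplus$ preserves the total leaf count plus one: $|T\oplus S|=|T|+|S|$, since the rightmost leaf of $T$ is replaced by $\circ(*,S)$. Appending one step raises the path length by one. Hence $\Phi$ maps $\mathcal A_n$ to $F_{n-1}$, which is Theorem~\ref{thm:tree-to-fpath}, and $\Psi$ maps $F_{n-1}$ to $\mathcal A_n$. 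Admissibility of $\Psi(Q)$ holds because each of the five output shapes is admissible whenever its constituents are, by Lemma~\ref{lem:sum-assoc} and the shapes listed in Lemma~\ref{lem:five-types}.

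For $\Psi\circ\Phi=\mathrm{id}$, take $T$ in one of the five classes. Let $(a,b)$ be the last step of $\Phi(T)$. The class of $T$ is recovered from $(a,b)$ as follows:
\begin{itemize}
\item $(0,1)$ gives class I;
\item $(1,1)$ gives class II;
\item $a\ge2$, $b=1$ gives class III, since $\bl(U)\ge1$;
\item $a=1$, $b\le0$ gives class IV;
\item $a\ge2$, $b\le 0$ gives class V.
\end{itemize}
Thus the case distinctions in $\Psi$ match those in $\Phi$. By induction, $\Psi$ applied to the prefix path returns $\widehat T$, which is $R$, $U\oplus R$ or $U_1\oplus U_2\oplus R$. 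By Lemma~\ref{lem:block-criterion}, this tree has a unique block decomposition, and it is the concatenation of the decompositions of $U$ (or $U_1$, $U_2$) and $R$. The split indices prescribed by $\Psi$ are exactly $\bl(U)=a-1$, or $\bl(U)=1-b$, or $\bl(U_1)=a-1$ together with $\bl(U_2)=1-b$. Therefore $\Psi$ cuts $\widehat T$ back into precisely $U$ (resp.\ $U_1,U_2$) and $R$, and the root shape reconstructs $T$.

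For $\Phi\circ\Psi=\mathrm{id}$, take $Q=\widehat Q(a,b)$. By Lemma~\ref{lem:block-rec} and Theorem~\ref{thm:tree-to-fpath}, $r=\bl(\Psi(\widehat Q))=\operatorname{ht}(\widehat Q)+1$. The trees $U$, $U_1$, $U_2$, $R$ built by $\Psi$ are direct sums of consecutive blocks, so their block numbers are the block counts $a-1$, $1-b$, and so on, by additivity in Lemma~\ref{lem:block-criterion}. The recomposed tree $U\oplus R$ (or $U_1\oplus U_2\oplus R$) equals $\Psi(\widehat Q)$ by associativity of $\oplus$ (Lemma~\ref{lem:sum-assoc}). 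Applying $\Phi$ therefore gives $\Phi(\Psi(\widehat Q))$ followed by the step $(\bl(U)+1,1)$, etc. By induction this is $\widehat Q$ followed by $(a,b)$.

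The main obstacle is making the block decompositions fully rigorous. Two points need care: $R$ must be nonempty in $\Psi$, which requires $r\ge a-b+1$ (the text derives this from the final height being nonnegative), and the blocks used must exist at all. I also need to confirm that the class-I root $\circ(*,R)$ is never confused with the separators inside $\widehat T$. These are all addressed by the inequality derived after the definition of $\Psi$ and by the characterization of indecomposability in Lemma~\ref{lem:block-criterion}. I would check them carefully before running the inductions.
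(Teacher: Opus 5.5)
Your proposal is correct and takes essentially the same route as the paper. Both argue by induction over the five root types of Lemma~\ref{lem:five-types}: the last step $(a,b)$ identifies the root type, and the unique block decomposition of $\widehat T$, together with additivity of $\bl$ and associativity of $\oplus$, recovers $U$, $U_1$, $U_2$, $R$ exactly. Your leaf-count bookkeeping and your use of the inductive hypothesis to justify $\bl(\Psi(\widehat Q))=\operatorname{ht}(\widehat Q)+1$ make precise points that the paper leaves implicit.
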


\begin{proof}
We prove  that
\[
\Psi(\Phi(T))=T
\qquad\text{and}\qquad
\Phi(\Psi(Q))=Q
\]
by induction on the number of internal vertices.

Suppose first that $T$ is given. In each of the five cases of
Lemma~\ref{lem:five-types}, the definition of $\Phi$ records
\begin{itemize}
\item the appropriate last step $(a,b)$;
\item the tree $\widehat T$ occurring before the last step;
\item the exact block lengths needed to recover $U$, $U_1$, $U_2$, and $R$.
\end{itemize}
Since the direct-sum decomposition is unique by
Lemma~\ref{lem:block-criterion}, applying $\Psi$ recovers exactly the same root
type and the same subtrees. Hence $\Psi(\Phi(T))=T$.

Conversely, let $Q=\widehat Q(a,b)\in F_m$. By induction on the length,
$\Phi(\Psi(\widehat Q))=\widehat Q$. The last step $(a,b)$ determines uniquely
which of the five root types is used in the construction of $\Psi(Q)$, and the
block decomposition of $\Psi(\widehat Q)$ determines uniquely the pieces
$U$, $U_1$, $U_2$, and $R$. Applying $\Phi$ to the resulting tree reattaches the
same last step $(a,b)$, hence $\Phi(\Psi(Q))=Q$.

For example, consider the fifth case
\[
T=\circ(\bullet(\bullet(*,U_1),U_2),R).
\]
Put
\[
\widehat T=U_1\oplus U_2\oplus R.
\]
Then, by definition of $\Phi$, we have
\[
\Phi(T)=\Phi(\widehat T)\,(\bl(U_1)+1,\,1-\bl(U_2)).
\]
Since $\widehat T$ has fewer internal vertices than $T$, by the induction
hypothesis
\[
\Psi(\Phi(\widehat T))=\widehat T.
\]
Therefore, when we apply $\Psi$ to $\Phi(T)$, first the prefix
$\Phi(\widehat T)$ reconstructs the tree
\[
\widehat T=U_1\oplus U_2\oplus R.
\]
Now the last step is
\[
(a,b)=(\bl(U_1)+1,\,1-\bl(U_2)).
\]
Hence
\[
a-1=\bl(U_1),\qquad a-b=\bl(U_1)+\bl(U_2).
\]
Thus, by the definition of $\Psi$, the direct-sum decomposition of
$\widehat T$ is split exactly into the three parts $U_1$, $U_2$, and $R$.
Consequently,
\[
\Psi(\Phi(T))
=
\circ(\bullet(\bullet(*,U_1),U_2),R)
=
T.
\]
This proves $\Psi(\Phi(T))=T$ in case \textup{(V)}.
\end{proof}

Let us summarize all the information.

\begin{table}[ht]
\centering
\scriptsize
\label{tab:cases-structure}
\setlength{\tabcolsep}{3pt}
\renewcommand{\arraystretch}{1.05}
\begin{tabular}{c|c|c|p{6.2cm}|c}
Case & Last step $(a,b)$ & Tree form & Block partition & $r_{\min}$ \\
\hline
I   & $(0,1)$
& $\circ(*,R)$
& $U=\emptyset,\;R=T_1\oplus\cdots\oplus T_r$
& $r\ge0$ \\

II  & $(1,1)$
& $\bullet(*,R)$
& $U=\emptyset,\;R=T_1\oplus\cdots\oplus T_r$
& $r\ge1$ \\

III & $(a,1),a\ge2$
& $\bullet(\bullet(*,U),R)$
& $U=T_1\oplus\cdots\oplus T_{a-1},\;R=T_a\oplus\cdots\oplus T_r$
& $r\ge a$ \\

IV  & $(1,b),b\le0$
& $\circ(\bullet(*,U),R)$
& $U=T_1\oplus\cdots\oplus T_{1-b},\;R=T_{2-b}\oplus\cdots\oplus T_r$
& $r\ge2-b$ \\

V   & $(a,b),a\ge2,b\le0$
& $\circ(\bullet(\bullet(*,U_1),U_2),R)$
& $U_1=T_1\oplus\cdots\oplus T_{a-1},\;
   U_2=T_a\oplus\cdots\oplus T_{a-b},\;
   R=T_{a-b+1}\oplus\cdots\oplus T_r$
& $r\ge a-b+1$ \\
\end{tabular}
\end{table}

\begin{table}[ht]
\centering
\scriptsize
\label{tab:cases-rules}
\setlength{\tabcolsep}{4pt}
\renewcommand{\arraystretch}{1.15}
\begin{tabular}{c|p{6.2cm}|p{6.2cm}}
Case & $\Phi$ rule & $\Psi$ rule \\
\hline
I
& $\Phi(\circ(*,R))=\Phi(R)(0,1)$
& $\Psi(\ldots(0,1))=\circ(*,\widehat T)$ \\

II
& $\Phi(\bullet(*,R))=\Phi(R)(1,1)$
& $\Psi(\ldots(1,1))=\bullet(*,\widehat T)$ \\

III
& $\Phi(\bullet(\bullet(*,U),R))=\Phi(U\oplus R)(a,1)$
& $\Psi(\widehat T(a,1))=\bullet(\bullet(*,U),R)$ \\

IV
& $\Phi(\circ(\bullet(*,U),R))=\Phi(U\oplus R)(1,b)$
& $\Psi(\widehat T(1,b))=\circ(\bullet(*,U),R)$ \\

V
& $\Phi(\circ(\bullet(\bullet(*,U_1),U_2),R))
   =\Phi(U_1\oplus U_2\oplus R)(a,b)$
& $\Psi(\widehat T(a,b))
   =\circ(\bullet(\bullet(*,U_1),U_2),R)$ \\
\end{tabular}
\end{table}

\begin{example}
    
For $n\le 3$, the bijection can be written explicitly as follows:

\[
\begin{array}{c|c|c}
\pre\text{-}\As^{\I}\langle x\rangle
& T\in\mathcal A_n 
& \Phi(T)\in F_{n-1} \\
\hline
x 
& * 
& \varnothing \\

x\succ x 
& \circ(*,*) 
& (0,1)\\

x\prec x 
& \bullet(*,*) 
& (1,1)\\

x\succ(x\succ x) 
& \circ(*,\circ(*,*)) 
& (0,1)(0,1)\\

x\prec(x\succ x) 
& \bullet(*,\circ(*,*)) 
& (0,1)(1,1)\\

x\succ(x\prec x) 
& \circ(*,\bullet(*,*)) 
& (1,1)(0,1)\\

x\prec(x\prec x) 
& \bullet(*,\bullet(*,*)) 
& (1,1)(1,1)\\

(x\prec x)\succ x 
& \circ(\bullet(*,*),*) 
& (0,1)(1,0)\\

(x\prec x)\prec x 
& \bullet(\bullet(*,*),*) 
& (0,1)(2,1)
\end{array}
\]
\end{example}


\end{document}